\newtheorem{theorem}{Theorem}
\newtheorem{lemma}[theorem]{Lemma}
\title{INEXACT LINEAR SOLVES IN MODEL REDUCTION OF BILINEAR DYNAMICAL SYSTEMS}
\author{}
\keywords{Model order reduction, Bilinear dynamical systems, Interpolatory projection, Stability analysis, Backward stability, Perturbation analysis, Volterra series interpolation.}
\begin{document}
\edef\slashcatcode{\the\catcode`\/}
\catcode`\/12
\newdimen\bigpoint \bigpoint 1bp
\newdimen\paperwidth
\newdimen\paperheight
\newcount\paperwidthcount
\newcount\paperheightcount
%
\def\clearbophook{%
  \special{!userdict begin /bop-hook {} def end }}
%
\def\PSprocappend#1#2{
  [#1 where {pop #1 load aload pop} if {#2} aload pop] cvx #1 exch def }
%
\def\papersize#1#2{%
  \paperwidth #1 \divide\paperwidth by \bigpoint
  \paperwidthcount\paperwidth
  \paperheight #2 \divide\paperheight by \bigpoint
  \paperheightcount\paperheight }
%
\def\mirror{%
  \special{!userdict begin \PSprocappend {/bop-hook}
    {[-1 0 0 1 \the\paperwidthcount\space 0] concat} end }}
%
\def\landscape{%
  \special{!userdict begin \PSprocappend {/bop-hook}
     {[0 -1 1 0 \the\paperwidthcount\space \the\paperheightcount\space sub
     \the\paperheightcount] concat} end }}
%
\def\hpfour{%
%
\def\userbophook#1{%
  \special{!userdict begin \PSprocappend {/bop-hook}{#1} end }}
%
\papersize{210mm}{297mm}
\catcode`\/\slashcatcode
%

\maketitle

\thispagestyle{plain}
\begin{center}
{\bf Rajendra Choudhary and Kapil Ahuja}
\end{center}
\vspace{3mm}

\begin{abstract}
	Bilinear dynamical systems are commonly used in science and engineering because they form a bridge between linear and non-linear systems. However, simulating them is still a challenge because of their large size. Hence, a lot of research is currently being done for reducing such bilinear dynamical systems (termed as bilinear model order reduction or bilinear MOR). Bilinear iterative rational Krylov algorithm (BIRKA) is a very popular, standard and mathematically sound algorithm for bilinear MOR, which is based upon interpolatory projection technique. An efficient variant of BIRKA, Truncated BIRKA (or TBIRKA) has also been recently proposed. 
	\par 
	Like for any MOR algorithm, these two algorithms also require solving multiple linear systems as part of the model reduction process. For reducing very large dynamical systems, which is now-a-days becoming a norm, scaling of such linear systems with respect to input dynamical system size is a bottleneck. For efficiency, these linear systems are often solved by an iterative solver, which introduces approximation errors. Hence, stability analysis of MOR algorithms with respect to inexact linear solves is important. In our past work, we have shown that under mild conditions, BIRKA is stable (in the sense as discussed above). Here, we look at stability of TBIRKA in the same context.
	\par 
	Besides deriving the conditions for a stable TBIRKA, our other novel contribution is the more intuitive methodology for achieving this. This approach exploits the fact that in TBIRKA a bilinear dynamical system can be represented by a finite set of functions, which was not possible in BIRKA (because infinite such functions were needed there). The stability analysis techniques that we propose here can be extended to many other methods for doing MOR of bilinear dynamical systems, e.g., using balanced truncation or the ADI methods.	
\end{abstract}

\vskip.5cm
\section{Introduction}\label{sec:Inroduction}
A dynamical system, usually represented by a set of differential equations, can be linear or non-linear \cite{Mathematics_and_Climate_dynamicalsystembook,Differential_Dynamical_Systems_book,dynamical_system_application_1986}. Linear dynamical systems have been studied more than the non-linear ones because of the obvious ease in working with them. Bilinear dynamical systems form a good bridge between the linear and the non-linear cases, and are usually approximated by a varying degree of bilinearity \cite{nonlinear_to_bilinear_paper_1974,RughnonlinearsystemBook,nonlinear_to_bilinear_paper_2015}. In this manuscript, we focus on bilinear dynamical systems.
\par 
Dynamical systems coming from different real world applications are very large in size. Thus, simulation and computation with such systems is prohibitively expensive. Model Order Reduction (MOR) techniques provide a smaller system that besides being cheaper to work with, also replicates the input-output behaviour of the original system to a great extent \cite{AntoulasBook,Peterbookdynamicalsystemreduction,grimmephdthesis,Zbaimorpaper,k_willcox_model_reduction_paper,serkanIRKAsiampaper,mimosystem,biliner_model_reduction_Zbai,BIRKApaper,AIRGApaper}. Since, bilinear dynamical systems have been recently studied, 
the techniques for reducing them are also recent. 
\par 
Out of the many methods available for performing bilinear MOR \cite{biliner_model_reduction_Zbai,BIRKApaper,bilinear_model_reduction_Mian_Ilyas,bilinear_model_reduction_peter_benner_balanced_trunc,bilinear_model_reduction_Antoulas_Loewner_framework,bilinear_model_reduction_kang-li_xu_gramian_based,garretphdthesis,TBIRKApaper}, we focus on a commonly used interpolatory projection method. BIRKA (Bilinear Iterative Rational Krylov Algorithm) \cite{BIRKApaper} is a very popular algorithm based upon this technique for reducing \textit{first-order} bilinear dynamical systems\footnote{\textit{First-order} implies that the highest derivative of the state variable in the dynamical system is one. \textit{Second-order} and \textit{higher-orders} are similarly defined.}. BIRKA's biggest drawback is that it does not scale well in time (with respect to increase in the size of the input dynamical system). A cheaper variant of BIRKA, called TBIRKA (Truncated Bilinear Iterative Rational Krylov Algorithm) \cite{garretphdthesis,TBIRKApaper} has also been proposed. 
\par 
Like in any other MOR algorithm, in BIRKA and TBIRKA also, people often use direct methods like LU-factorization, etc., to solve the arising linear systems, which are too expensive (i.e., computational complexity of $\mathcal{O}(n^3)$, where $n$ is the original system size). A common solution to this scaling problem is to use iterative methods like the Krylov subspace methods, etc., which have a reduced computational complexity (i.e., $\mathcal{O}(n \times nnz)$, where $nnz$ is the number of nonzeros in the system matrix) \cite{greenbaum_book,SaaditerativeBook}. Although iterative methods are cheap, they are inexact too. Hence, studying stability of the underlying MOR algorithm (here BIRKA and TBIRKA) with respect to such approximate linear solves becomes important \cite{TrefethenBauBook,Demmelbook}. 
\par 
One of the first works that performed such a stability analysis focused on popular MOR algorithms for \textit{first-order} linear dynamical systems \cite{sarahinexactlaapaper}. Here, the authors briefly mention that their analysis would be easily carried from the \textit{first-order} to the \textit{second-order} case. A detailed stability analysis focusing on reducing \textit{second-order} linear dynamical systems has been done
in \cite{NavSISC}. A different kind of stability analysis for MOR of \textit{second-order} linear dynamical systems has been done in \cite{TOAR_stability_analysis_ZBai}. In this, the authors first show that the SOAR algorithm (second order Arnoldi) is unstable with respect to the machine precision errors (and not inexact linear solves). Then, they propose a Two-level orthogonal Arnoldi (TOAR) algorithm that cures this instability of SOAR. An extended stability analysis for BIRKA (as above, a popular MOR algorithm for \textit{first-order} bilinear dynamical systems) has been recently done in \cite{BIRKAstabilitylaapaper}. For rest of this manuscript, whenever stability analysis is referred, we mean it with respect to inexact linear solves. 
\par 
We follow the stability analysis framework of BIRKA from \cite{BIRKAstabilitylaapaper} and propose equivalent theorems for TBIRKA. The approach here is slightly different, which forms our most \textit{novel} contribution. Norm of the dynamical system plays an important role in stability analysis (the kind of norm is discussed later). In BIRKA stability analysis, a single expression for bilinear dynamical system norm is used (involving a Volterra series). In TBIRKA stability analysis, a similar single expression (involving a truncated Volterra series) leads to complications. Alternatively, in TBIRKA, because of truncation, the bilinear dynamical system can be represented by a finite set of functions. This was not possible in BIRKA where infinite such functions were needed. Thus, in TBIRKA stability analysis, we use norm of all such functions leading to easier derivations. Our stability analysis, as done for BIRKA earlier and for TBIRKA here, can be easily extended to other MOR algorithms for bilinear dynamical systems, e.g., projection based \cite{biliner_model_reduction_Zbai}, implicit Volterra series \cite{bilinear_model_reduction_Mian_Ilyas}, balanced truncation \cite{bilinear_model_reduction_peter_benner_balanced_trunc},  gramian based \cite{bilinear_model_reduction_kang-li_xu_gramian_based}, etc.
\par 
The rest of the paper is divided into three more parts. In Section \ref{sec:Background}, we first give a brief overview of MOR for bilinear dynamical systems using a projection method. Next, we review the stability analysis of BIRKA from \cite{BIRKAstabilitylaapaper}. A stability analysis typically involves satisfying two conditions. Hence, finally in this section, we study the first condition for a stable TBIRKA. We analyze the second condition of stability in TBIRKA's context in Section \ref{sec:Stability_Analysis}. In Section  \ref{sec:Conclusions Future Work}, we give conclusions and discuss the future work. For the rest of this paper, we use the terms and notations as listed below.
\begin{enumerate}
	\item [a.] The $H_2-$norm is a functional norm defined as \cite{sarahinexactlaapaper,garretphdthesis,TBIRKApaper}
	\begin{alignat}{2}\label{eq:H_2normdefinitionsubsystem}
	\left \| H_k \right \|_{H_2}^2 = \left ( \frac{1}{2\pi } \right )^k \int_{-\infty}^{\infty} \ldots  \int_{-\infty}^{\infty} \left \| H_k\left ( i\omega_1, \ \ldots, \ i \omega_k  \right ) \right \|_F^2 d\omega_1\ldots d\omega_k, 
	\end{alignat}
	where $i$ denotes $\sqrt{-1}$. Here, we assume that all $H_2-$norms computed further exist. In other words, the improper integrals defined by the $H_2-$norm give finite value. This is a reasonable assumption because this happens often in practice (see \cite{sarahinexactlaapaper}, where stability analysis of IRKA is done as well as \cite{BIRKAstabilitylaapaper}, where stability analysis of BIRKA is done). 
	\item[b.] The $H_\infty-$norm is also a functional norm, defined as \cite{sarahinexactlaapaper,garretphdthesis,TBIRKApaper}
	\begin{alignat*}{2}
	\left \| H_k \right \|_{H_\infty} = \underset{\omega_1,\ \ldots, \ \omega_k \in \mathbb{R}}{max} \left \| H_k \left (  i \omega_1,\ \ldots, \ i\omega_k \right ) \right \|_2. \notag
	\end{alignat*}
	\item [c.] The Kronecker product between two matrices $P$ (of size $m\times n$), and $Q$ (of size $s \times t$) is defined as
	\begin{equation}
	P\otimes Q =\begin{bmatrix}
	p_{11}Q & \cdots  & p_{1n}Q\\ 
	\vdots  & \ddots  & \vdots\\ 
	p_{m1}Q & \cdots & p_{mn}Q 
	\end{bmatrix}, \notag
	\end{equation}
	where $p_{ij}$ is an element of matrix $P$ and order of $P\otimes Q$ is $ms\times nt$. 
	\item [d.] $vec$ operator on a matrix $P$ is defined as
	\begin{equation}
	vec(P)= \left ( p_{11},\ \ldots,\ p_{m1},\ p_{12},\ \ldots,\ p_{m2},\ \ldots \ \ldots,\ p_{1n},\ \ldots,\ p_{mn} \right )^T. \notag
	\end{equation}
	\item [e.] Also, $I_n$ denotes an identity matrix of size $n \times n$ and $\mathbb{R}$ denotes the set of real numbers.
\end{enumerate}
\section{Background}\label{sec:Background}
A \textit{first-order} bilinear dynamical system is usually represented as \cite{biliner_model_reduction_Zbai,BIRKApaper}
\begin{equation}
\zeta: \left\{
\begin{array}{ll}\label{eq:originalbilinearequation}
\dot{x}(t) = Ax(t) +  N x(t) u(t)+bu(t), \\
y(t) = cx(t), 
\end{array}
\right.
\end{equation}
where $ A,N  \in \mathbb{R}^{n \times n}$, $b\in\mathbb{R}^{n \times 1} \ \textnormal{and} \ c \in  \mathbb{R}^{1\times n} $. Also, $u(t):\mathbb{R} \rightarrow \mathbb{R}, \ y(t):\mathbb{R} \rightarrow \mathbb{R} \  \textnormal{and} \ x(t): \mathbb{R} \rightarrow \mathbb{R}^n$, represent input, output and state of the bilinear dynamical system, respectively. This is a Single Input Single Output (SISO) system, which we have chosen for ease of our analysis. We plan to look at Multiple Input Multiple Output (MIMO) systems as part of our future work.  
\par 
A bilinear dynamical system can also be represented by a series of transfer functions, i.e.,
\begin{align} \label{eq:bilinear_series_representation_infinity}
\zeta = \underset{k\rightarrow \infty }{Lim} \ \zeta^k, 
\end{align} 
where $\zeta^k = \left \{H_{1}\left ( s_{1} \right ),\ H_{2}\left ( s_{1},\ s_{2} \right ),\ \ldots, \ H_k\left(s_1, \ s_2,\ \ldots, \ s_k\right)  \right\}$. Here,\\ $H_k\left(s_1, \ s_2,\ \ldots, \ s_k\right) $ is called the $k^{th}$ order transfer function of the bilinear dynamical system and is defined as
\begin{alignat}{2}\label{eq:subsystem_transfer_function}
H_{k}\left ( s_{1},\ \ldots,\ s_{k} \right ) =c\left ( s_{k}I-A \right )^{-1}N \left ( s_{k-1}I-A \right )^{-1}\ \ldots  N\left ( s_{1}I-A \right )^{-1}b.
\end{alignat}
After reduction, the bilinear dynamical system \eqref{eq:originalbilinearequation} can be represented as
\begin{equation}
\zeta_r : \left\{
\begin{array}{ll} \label{eq:reducedbilinearequation}
\dot{x}_r(t)= A_r x_r (t) +  N_r x_r(t) u(t)+ b_r u(t), \\
y_r(t) = c_r x_r(t), 
\end{array}
\right.
\end{equation}
where $A_r,\ N_r \in \mathbb{R}^{r \times r}$, $ b_r \in \mathbb{R}^{r\times 1} \ \textnormal{and} \ c_r\in \mathbb{R}^{1\times r} \ $  with $r \ll n$. The main goal of model reduction is to approximate $\zeta$ by $\zeta_r$ in an appropriate norm, such that for all admissible inputs, $y_r(t)$ is nearly same to $y(t)$. 
\par
As mentioned earlier, we use interpolatory projection for performing model reduction. The two common and standard algorithms here, BIRKA and TBIRKA, use a Petrov-Galerkin projection. Let $\mathcal{V}_r$ and $\mathcal{W}_r$ be the two $r$-dimensional subspaces, such that $\mathcal{V}_r = Range (V_r)$ and $\mathcal{W}_r = Range (W_r)$, where $V_r \in \mathbb{R}^{n \times r}$ and $W_r \in \mathbb{R}^{n \times r}$ are matrices. Also, let $\left ( W_r^T V_r   \right )$ be invertible\footnote{Obtaining such an invertible matrix is not
	difficult \cite{BIRKApaper,TBIRKApaper}.}. Applying the projection $x(t)= V_r x_r(t)$, and enforcing the Petrov-Galerkin conditions \cite{BIRKApaper,TBIRKApaper} on the original bilinear dynamical system \eqref{eq:originalbilinearequation}, we get the reduced system as
\begin{alignat*}{2}
\begin{split}
W_r ^T & \left( V_r \dot{x}_r(t)-A V_r x_r(t) -  N V_r x_r(t) u(t)-  bu(t) \right) = 0,\\
y(t) & = c V_rx_r(t).
\end{split}	
\end{alignat*} 
Comparing the above two equations with their respective equations in \eqref{eq:reducedbilinearequation}, we get a relation between the original system matrices and the reduced system matrices, i.e., 
\begin{alignat*}{2}
\begin{split} 
A_{r}&= \left ( W_r^T V_r   \right )^{-1} W_{r}^{T} A V_{r},\ N_r= \left ( W_r^T V_r   \right )^{-1} W_{r}^{T} N V_{r}, \\  b_{r}&= \left ( W_r^T V_r   \right )^{-1} W_{r}^{T} b, \ \textnormal{and} \  c_{r}= c V_{r}.
\end{split}
\end{alignat*} 
\par 
One way of obtaining subspaces $\mathcal{V}_r$ and $\mathcal{W}_r$ is to use Volterra series interpolation. Further, to decide where to interpolate so as to obtain an optimal reduced model, an $H_2-$optimization problem is commonly solved (Theorem 4.7 from \cite{garretphdthesis}).
\begin{theorem}\cite{garretphdthesis}
	\label{Theorem:Interpolation_condition_infinity}
	Let $\zeta$ be a bilinear system of order n. Let $\zeta_r$ be an $H_2 -$ optimal approximation of order r. Then, $\zeta_r$ satisfies the following multi-point Volterra series interpolation conditions:
	\begin{alignat}{2}
	\sum_{k=1}^{\infty}\sum_{l_1=1}^{r} \ldots \sum_{l_{k}=1}^{r} \phi_{l_1,\ l_2,\ \ldots, \ l_{k}} H_k \left( -\lambda_{l_1},\ -\lambda_{l_2},\ \ldots,\ -\lambda_{l_k} \right) \  = \notag\\ \sum_{k=1}^{\infty}\sum_{l_1=1}^{r} \ldots \sum_{l_{k}=1}^{r} \phi_{l_1,\ l_2,\ \ldots,\ l_{k}}  H_{r_k} \left( -\lambda_{l_1},\ -\lambda_{l_2},\ \ldots,\ -\lambda_{l_k} \right),\quad \textnormal{and} \notag \\
	\sum_{k=1}^{\infty}\sum_{l_1=1}^{r} \ldots \sum_{l_{k}=1}^{r} \phi_{l_1,\ l_2,\ \ldots,\ l_{k}}                \left ( \sum_{j=1}^{k} \frac{\partial }{\partial s_j} H_k\left ( -\lambda_{l_1},\ -\lambda_{l_2},\ \ldots,\ -\lambda_{l_k}\right )  \right ) = \notag \\ \sum_{k=1}^{\infty}\sum_{l_1=1}^{r} \ldots \sum_{l_{k}=1}^{r} \phi_{l_1,\ l_2,\ \ldots,\ l_{k}}    \left (  \sum_{j=1}^{k} \frac{\partial }{\partial s_j}  H_{r_k}\left ( -\lambda_{l_1},\ -\lambda_{l_2},\ \ldots, \ -\lambda_{l_k}\right )    \right ),\notag	
	\end{alignat}
	where $\phi_{l_1,\ l_2,\ \ldots,\ l_{k}}$ and $\lambda_{l_1},\ \lambda_{l_2},\ \ldots,\ \lambda_{l_k}$ 	 are residues and poles of the transfer function $H_{r_k}$ associated with $\zeta_r$, respectively.	
\end{theorem}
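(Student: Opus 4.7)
The plan is to derive the stated equalities as the first-order necessary conditions for $H_2$-optimality. First, I would write the squared $H_2$-error of the full bilinear system as a sum over subsystems,
\begin{align*}
\left\| \zeta - \zeta_r \right\|_{H_2}^2 = \sum_{k=1}^{\infty} \left\| H_k - H_{r_k} \right\|_{H_2}^2,
\end{align*}
where convergence and finiteness of each term is guaranteed by the blanket assumption stated after \eqref{eq:H_2normdefinitionsubsystem}. Assuming $A_r$ is diagonalizable, with eigen-decomposition $A_r = R \Lambda R^{-1}$ and $\Lambda = \textnormal{diag}(\lambda_1, \ldots, \lambda_r)$, I would then pass each reduced transfer function to pole-residue form by partial fraction expansion,
\begin{align*}
H_{r_k}(s_1,\ \ldots,\ s_k) = \sum_{l_1=1}^{r} \ldots \sum_{l_k=1}^{r} \frac{ \phi_{l_1,\ \ldots,\ l_k} }{ (s_1 - \lambda_{l_1}) \cdots (s_k - \lambda_{l_k}) },
\end{align*}
where each residue $\phi_{l_1,\ \ldots,\ l_k}$ is an explicit multilinear function of the eigenvectors of $A_r$ together with $c_r$, $N_r$, and $b_r$.

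Next, I would use the standard residue-calculus identity for the $H_2$-inner product: for any transfer function $G$ analytic in the open right half-plane and any pole $-\lambda_l$ in the open left half-plane,
\begin{align*}
\left( \frac{1}{2\pi} \right)^{\!k} \int_{-\infty}^{\infty}\!\! \ldots \int_{-\infty}^{\infty}\! G(i\omega_1,\ \ldots,\ i\omega_k) \overline{ \frac{1}{\prod_{j=1}^{k}(i\omega_j - \lambda_{l_j})} } \, d\omega_1 \ldots d\omega_k = G(-\lambda_{l_1},\ \ldots,\ -\lambda_{l_k}),
\end{align*}
obtained by closing the contour in the appropriate half-plane in each variable. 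Substituting the pole-residue expansion of $H_{r_k}$ into the error expression turns $\left\| \zeta - \zeta_r \right\|_{H_2}^2$ into a quantity depending on the free parameters $\{ \lambda_l \}$ and $\{ \phi_{l_1,\ \ldots,\ l_k} \}$ only through evaluations of $H_k$ and $H_{r_k}$ (and their derivatives) at mirror-image points $(-\lambda_{l_1},\ \ldots,\ -\lambda_{l_k})$.

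Finally, I would impose the first-order optimality conditions. Differentiating $\left\| \zeta - \zeta_r \right\|_{H_2}^2$ with respect to each residue $\phi_{l_1,\ \ldots,\ l_k}$ and setting the result to zero yields the first interpolation identity. Differentiating with respect to each pole $\lambda_l$ produces, via the chain rule, partial derivatives $\partial/\partial s_j$ summed over all argument slots $j$ into which that particular pole feeds, which is precisely the Hermite-type second interpolation condition. The main obstacle is the bookkeeping: a single eigenvalue $\lambda_l$ enters every subsystem $H_{r_k}$ for every $k \geq 1$ and in every coordinate position simultaneously, so applying the chain rule while maintaining absolute convergence of the resulting doubly-infinite multi-index sums, and verifying that differentiation can be exchanged with the integral in \eqref{eq:H_2normdefinitionsubsystem} and with the Volterra summation, is the technically delicate part of the argument.
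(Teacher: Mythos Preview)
The paper does not contain a proof of this theorem. It is quoted verbatim as Theorem~4.7 from \cite{garretphdthesis} and used purely as background to motivate the design of BIRKA; no argument is given or even sketched in the present paper. So there is nothing here to compare your proposal against.

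That said, your outline is the standard route taken in the source you would ultimately be reproducing: expand the reduced subsystems in pole--residue form, evaluate the $H_2$ inner products by residue calculus to obtain point evaluations at the mirror images $-\lambda_{l_j}$, and then set the gradient of $\|\zeta-\zeta_r\|_{H_2}^2$ with respect to the free residues and poles to zero. The residue derivative gives the Lagrange-type condition and the pole derivative gives the Hermite-type condition, exactly as you say. Your identification of the genuine technical issue---justifying the interchange of differentiation with the infinite Volterra sum and the improper integrals, and tracking how a single pole $\lambda_l$ appears in every subsystem and every slot---is accurate; in \cite{garretphdthesis,TBIRKApaper} this is handled under the standing assumption that all the relevant $H_2$-norms are finite (the same blanket assumption this paper adopts after \eqref{eq:H_2normdefinitionsubsystem}).
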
  
BIRKA is designed in such a way that at convergence, the conditions of Theorem \ref{Theorem:Interpolation_condition_infinity} are satisfied leading to a locally $H_2-$optimal reduced model. Algorithm \ref{BIRKAAlgo} lists BIRKA.
\begin{algorithm}
	\caption{BIRKA~\cite{BIRKApaper,garretphdthesis,TBIRKApaper}}
	\label{BIRKAAlgo}
	\begin{algorithmic}[1]
		\\ Given an input bilinear dynamical system $A,\ N , \ b,\ c$.
		\\ Select an initial guess for the reduced system as $\check{A},\ \check{N},\ \check{b},\ \check{c}$. Also select stopping tolerance $btol$.
		\\While $\left ( \textnormal{relative change in eigenvalues of} \ \check{A}  \geq btol \right )$ \begin{enumerate}
			\item[a.] $R \Lambda R^{-1} = \check{A},\ \check{\check{b}}= \check{b}^T R^{-T}, \ \check{\check{c}}=\check{c}R, \ \check{\check{N}}= R^T \check{N} R^{-T}$. 
			\item[b.] $vec\left ( \mathbf{V }\right ) = \left ( -\Lambda \otimes I_n - I_{r} \otimes A - \check{\check{N}}^T \otimes  N  \right )^{-1}   \left ( \check{\check{b}}^T \otimes b\right )$.
			\item[c.] $vec\left ( \mathbf{W}\right ) = \left ( -\Lambda \otimes I_n - I_{r} \otimes A^T - \check{\check{N}} \otimes N^T \right )^{-1} \left ( \check{\check{c}}^T \otimes c^T\right )$.
			\item[d.] $\mathbf{V}_r = orth\left ( \mathbf{V }  \right ) , \ \mathbf{W}_r = orth\left ( \mathbf{W}  \right ) $.
			\item[e.] $\check{A}= (\mathbf{W}^T_r \mathbf{V}_r)^{-1} \mathbf{W}^T_r A \mathbf{V}_r, \quad  \check{N}= \left ( \mathbf{W}^T_r \mathbf{V}_r \right )^{-1} \mathbf{W}^T_r N \mathbf{V}_r,$ \quad 
			\item[] $\check{b}=\left ( \mathbf{W}^T_r \mathbf{V}_r \right )^{-1} \mathbf{W}^T_r b, \quad \check{c}= c\mathbf{V}_r.$	
		\end{enumerate}
		\\ $A_r = \check{A}, \quad N_r = \check{N}, \quad b_r =\check{b}, \quad c_r = \check{c}$.
	\end{algorithmic}
\end{algorithm}
\par 
TBIRKA is similar to BIRKA in most aspects except that it performs a truncated Volterra series interpolation. Here, instead of $\zeta$ in \eqref{eq:originalbilinearequation}-\eqref{eq:bilinear_series_representation_infinity}, they work with $\zeta^M$, which is defined as 
\begin{align}\label{eq:bilinear_series_representation_truncated}
\zeta^M = \left\{ H_{1}\left ( s_{1} \right), \ H_{2}\left ( s_{1},\ s_{2} \right),\ H_{3}\left ( s_{1},\ s_{2},\ s_{3} \right),\ \ldots, \ H_{M}\left ( s_{1},\ \ldots, \ s_{M} \right ) \right \},
\end{align}
with $H_{k}\left ( s_{1},\ \ldots, \ s_{k} \right )$ for $k=1, \ \ldots, \ M$ is given by \eqref{eq:subsystem_transfer_function}. Equivalent of Theorem \ref{Theorem:Interpolation_condition_infinity} above is as follows (Theorem 4.8 from \cite{garretphdthesis}):
\begin{theorem}\cite{garretphdthesis}
	\label{Theorem:Interpolation_condition_M_term}
	Let $\zeta=(A, N, b, c)$ be an order $n$ bilinear system and $\zeta^M$ be the polynomial system determined by $\zeta$. Let $\zeta_r=(A_r, N_r, b_r, c_r)$ be a bilinear system of order $r$, and define $\zeta_r^M$ as the polynomial system determined by $\zeta_r$. Suppose that $\zeta_r^M$ is an $H_2-$optimal approximation to $\zeta^M$. Then $\zeta_r^M$ satisfies
	\begin{alignat}{2}
	\sum_{k=1}^{M}\sum_{l_1=1}^{r} \ldots \sum_{l_{k}=1}^{r} \phi_{l_1,\ l_2,\ \ldots, \ l_{k}} H_k \left( -\lambda_{l_1},\ -\lambda_{l_2},\ \ldots,\ -\lambda_{l_k} \right) \  = \notag\\ \sum_{k=1}^{M}\sum_{l_1=1}^{r} \ldots \sum_{l_{k}=1}^{r} \phi_{l_1,\ l_2,\ \ldots,\ l_{k}}  H_{r_k} \left( -\lambda_{l_1},\ -\lambda_{l_2},\ \ldots,\ -\lambda_{l_k} \right),\quad \textnormal{and} \notag 
	\end{alignat}
	\begin{alignat}{2}
	\sum_{k=1}^{M}\sum_{l_1=1}^{r} \ldots \sum_{l_{k}=1}^{r} \phi_{l_1,\ l_2,\ \ldots,\ l_{k}}                \left ( \sum_{j=1}^{k} \frac{\partial }{\partial s_j} H_k\left ( -\lambda_{l_1},\ -\lambda_{l_2},\ \ldots,\ -\lambda_{l_k}\right )  \right ) = \notag \\ \sum_{k=1}^{M}\sum_{l_1=1}^{r} \ldots \sum_{l_{k}=1}^{r} \phi_{l_1,\ l_2,\ \ldots,\ l_{k}}    \left (  \sum_{j=1}^{k} \frac{\partial }{\partial s_j}  H_{r_k}\left ( -\lambda_{l_1},\ -\lambda_{l_2},\ \ldots, \ -\lambda_{l_k}\right )    \right ),\notag	
	\end{alignat}
	where $\phi_{l_1,\ l_2,\ \ldots,\ l_{k}}$ and $\lambda_{l_1},\ \lambda_{l_2},\ \ldots,\ \lambda_{l_k}$ 	 are residues and poles of the transfer function $H_{r_k}$ associated with $\zeta_r^M$, respectively.	
\end{theorem}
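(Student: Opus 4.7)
The plan is to adapt the variational argument underlying Theorem \ref{Theorem:Interpolation_condition_infinity} to the truncated polynomial system $\zeta^M$. Because the $k$th-order transfer function depends on $k$ variables, the $H_2-$norm \eqref{eq:H_2normdefinitionsubsystem} decouples across orders, so I would first write the $H_2-$error as
\begin{equation*}
J(\zeta_r^M) \;=\; \sum_{k=1}^{M}\left\|H_k - H_{r_k}\right\|_{H_2}^2.
\end{equation*}
Since $\zeta_r^M$ is assumed to be an $H_2-$optimal approximation to $\zeta^M$, $J$ is stationary at $\zeta_r^M$ with respect to admissible perturbations of the reduced parameters $A_r,\ N_r,\ b_r,\ c_r$.

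Next I would diagonalise $A_r = R\Lambda R^{-1}$ with eigenvalues $-\lambda_l$, $l=1,\ldots,r$, and set $\tilde c_r = c_r R$, $\tilde b_r = R^{-1} b_r$, $\tilde N_r = R^{-1} N_r R$. Expanding every factor $(s_j I - A_r)^{-1}$ then gives the clean pole-residue form
\begin{equation*}
H_{r_k}(s_1,\ldots,s_k) \;=\; \sum_{l_1,\ldots,l_k=1}^{r}\frac{\phi_{l_1,\ldots,l_k}}{\prod_{j=1}^{k}\bigl(s_j - \lambda_{l_j}\bigr)},
\end{equation*}
with $\phi_{l_1,\ldots,l_k} = \tilde c_{r,l_k}\,(\tilde N_r)_{l_k,l_{k-1}}\cdots(\tilde N_r)_{l_2,l_1}\tilde b_{r,l_1}$. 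In these coordinates the parameters with respect to which we perturb $J$ are precisely the residues $\phi_{l_1,\ldots,l_k}$ and the poles $\lambda_l$ of the theorem statement.

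I would then compute the two families of first-order optimality conditions. Differentiating $J$ with respect to a residue $\phi_{l_1,\ldots,l_k}$ and evaluating the resulting $H_2$ inner product by a multivariate residue (Plancherel) calculation turns $\bigl\langle H_k - H_{r_k},\ \prod_{j}(s_j - \lambda_{l_j})^{-1}\bigr\rangle_{H_2}$ into the point evaluation $\bigl[H_k - H_{r_k}\bigr](-\lambda_{l_1},\ldots,-\lambda_{l_k})$; summing the resulting conditions (each weighted by the residue that produced it) and collecting over $k=1,\ldots,M$ yields the first (value) identity of the theorem. Differentiating with respect to $\lambda_l$ instead introduces an extra factor $(s_j - \lambda_l)^{-1}$ for every coordinate $j$ in which $\lambda_l$ appears; after the same residue calculation that extra factor becomes $\partial/\partial s_j$, producing exactly the symmetric derivative sum $\sum_{j=1}^{k}\partial/\partial s_j$ of the second (Hermite) identity.

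The main obstacle will be the residue/Plancherel computation that converts multivariate $H_2$ inner products into pointwise evaluations and their Hermite derivatives at the shifted arguments $(-\lambda_{l_1},\ldots,-\lambda_{l_k})$, together with the bookkeeping needed to verify that differentiating with respect to a single $\lambda_l$ produces exactly the clean symmetric sum $\sum_{j=1}^{k}\partial/\partial s_j$ and no unwanted additional contributions from the other factors in the denominator. Relative to Theorem \ref{Theorem:Interpolation_condition_infinity}, the truncation $k\leq M$ is a genuine simplification: every sum is finite, no tail estimates are needed, and the stationarity conditions decouple cleanly order by order, so the whole argument reduces to a manipulation of finitely many rational expressions.
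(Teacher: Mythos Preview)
The paper does not prove this theorem at all: it is stated as background (``Theorem 4.8 from \cite{garretphdthesis}'') and cited verbatim from Flagg's thesis, with no accompanying argument in this manuscript. Consequently there is no in-paper proof to compare your proposal against.

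That said, your variational outline is the standard route to such $H_2$-optimality interpolation conditions and is essentially how the result is derived in the cited source and in \cite{TBIRKApaper}: write the squared $H_2$-error as a finite sum $\sum_{k=1}^{M}\|H_k-H_{r_k}\|_{H_2}^2$, pass to pole--residue coordinates via diagonalisation of $A_r$, and impose stationarity with respect to the residues (yielding the value conditions) and the poles (yielding the Hermite conditions). One point to watch in your bookkeeping: the residues $\phi_{l_1,\ldots,l_k}$ and the poles $\lambda_l$ are \emph{not} independent parameters --- the $\phi$'s are built from $\tilde c_r$, $\tilde N_r$, $\tilde b_r$ and a single $\lambda_l$ appears in many terms across all orders $k$ --- so differentiating with respect to them directly is a formal shortcut. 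The clean way (and the way the original proofs proceed) is to differentiate with respect to the actual reduced-model data $A_r,N_r,b_r,c_r$ (or equivalently the entries of $\Lambda,\tilde N_r,\tilde b_r,\tilde c_r$) and then reorganise; this is what guarantees that the derivative with respect to $\lambda_l$ really collapses to the symmetric sum $\sum_{j=1}^{k}\partial/\partial s_j$ you want, with no spurious cross-terms.
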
  
Algorithm \ref{TBIRKAAlgo} lists TBIRKA.
\par 
Both BIRKA and TBIRKA in turn require solving large sparse linear systems of equations. If we compare Algorithm \ref{BIRKAAlgo} and \ref{TBIRKAAlgo}, we realize that the cost of linear solvers at each step of the \texttt{While} loop in the former is $\mathcal{O}\left(nr \times nr\right)$ and in the latter is $\mathcal{O}\left(M\times nr \times nr \right)$. This makes it seem that TBIRKA is more expensive than BIRKA. However, TBIRKA is implemented in such a way that the Kronecker products are avoided leading to cost of linear solves at each step of the \texttt{While} loop to be $\mathcal{O}\left(M \times r \times \left(n \times n\right)\right)$. Thus, if $M < r$, which is usually the case (e.g, $M=3 \ \textnormal{or}\  4$ and $r \gg M$), then TBIRKA is more efficient than BIRKA. For further details on this see chapter 4 in \cite{garretphdthesis} and Section 5.3 in \cite{TBIRKApaper}. These implementation details do not affect our stability analysis, and hence, we use Algorithm \ref{TBIRKAAlgo} in the current form as our base.
\begin{algorithm}[h]
	\caption{TBIRKA~\cite{garretphdthesis,TBIRKApaper}}
	\label{TBIRKAAlgo}
	\begin{algorithmic}[1]
		\\ Given an input bilinear dynamical system $A,\ N, \ b,\ c$.
		\\ Select an initial guess for the reduced system as  $ \check{A},\ \check{N},\ \check{b},\ \check{c}$. Also select the truncation index $M$ and stopping tolerance $tbtol$.
		\\ While $\left ( \textnormal{relative change in eigenvalues of} \ \check{A}  \geq tbtol \right )$
		\begin{enumerate}
			\item[a.] $R \Lambda R^{-1} = \check{A},\ \check{\check{b}}= \check{b}^T R^{-T},  \ \check{\check{c}}=\check{c}R,\  \check{\check{N}}= R^T \check{N} R^{-T}$.
			\item[b.] Compute
			\begin{align}
			& vec\left ( \mathbf{V}_{1} \right ) = \left ( -\Lambda\otimes I_{n}  -I_{r}\otimes A  \right )^{-1} \left(\check{\check{b}}^T \otimes b\right),\notag \\
			& vec\left ( \mathbf{W}_{1} \right ) = \left ( -\Lambda\otimes I_{n}  -I_{r}\otimes A^{T}  \right )^{-1} \left(\check{\check{c}}^T \otimes c^T\right). \notag 
			\end{align}
			\item[c.] For $j = 2, \ldots, M$, solve
			\begin{align}
			& vec\left ( \mathbf{V}_{j} \right ) = \left ( -\Lambda\otimes I_{n}  -I_{r}\otimes A  \right )^{-1} \left ( \check{\check{N}}^T \otimes N \right )  vec\left ( \mathbf{V}_{j-1} \right ), \notag \\
			& vec\left ( \mathbf{W}_{j} \right ) = \left ( -\Lambda\otimes I_{n}  -I_{r}\otimes A^{T}  \right )^{-1}  \left ( \check{\check{N}} \otimes N^{T} \right )   vec\left ( \mathbf{W}_{j-1} \right ). \notag
			\end{align}
			\item[d.] $\mathbf{V} = \sum\limits_{j=1}^{M} \mathbf{V}_j , \ \mathbf{W} = \sum\limits_{j=1}^{M} \mathbf{W}_j $.
			\item[e.] $\mathbf{V}_r = orth\left ( \mathbf{V} \right ) , \ \mathbf{W}_r = orth\left ( \mathbf{W}   \right ) $.
			\item [f.] $\check{A}= (\mathbf{W}^T_r \mathbf{V}_r)^{-1} \mathbf{W}^T_r A \mathbf{V}_r,\quad \check{N}= \left ( \mathbf{W}^T_r \mathbf{V}_r \right )^{-1} \mathbf{W}^T_r N \mathbf{V}_r,$  
			\item [] $\check{b}=\left ( \mathbf{W}^T_r \mathbf{V}_r \right )^{-1} \mathbf{W}^T_r b, \quad \check{c}= c\mathbf{V}_r$.			
		\end{enumerate}		
		\\ $A_r = \check{A}, \quad N_{r} = \check{N}, \quad b_r =\check{b}, \quad c_r = \check{c}$.
	\end{algorithmic}
\end{algorithm}
\par    
As mentioned earlier, using iterative methods for solving such linear systems introduces approximation errors. We have done a detailed stability analysis of BIRKA with respect to the inexact linear solves in \cite{BIRKAstabilitylaapaper}, and we briefly revisit this next. Generally, accuracy is the metric that tells about the correctness in the output of an inexact algorithm. Due to unavailability of the exact output, it is not possible to determine accuracy \cite{TrefethenBauBook,BIRKAstabilitylaapaper}. A more easier metric is stability. Backward stability is one such notation, which says ``A backward stable algorithm gives exactly the right output to nearly the right input" \cite{TrefethenBauBook}. 
In our context, theoretically we obtain two reduced systems. One by applying an inexact MOR algorithm (with iterative linear solves) on the original full model, and other by applying the same MOR algorithm but exactly (with direct linear solves) on a perturbed full model (the perturbation is introduced in the original full model as part of stability analysis, and is an unknown quantity). If these two reduced systems are equal (\textit{first condition}), with the difference between the original full model and the perturbed full model equal to the order of perturbation  (\textit{second condition}), then the MOR algorithm under consideration is called backward stable. The two theorems summarizing this stability analysis for BIRKA are listed below. 
\begin{theorem}\cite{BIRKAstabilitylaapaper} \label{Theorem:BIRKA_firstcondition}
	If the inexact linear solves in BIRKA (lines 3b. and 3c. of Algorithm \ref{BIRKAAlgo}) are solved using a Petrov-Galerkin framework, then BIRKA satisfies the first
	condition of backward stability with respect to these solves.
\end{theorem}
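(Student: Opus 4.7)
The plan is to exhibit an \emph{unknown} perturbation $(\Delta A, \Delta N, \Delta b, \Delta c)$ of the original full model such that inexact BIRKA applied to $(A, N, b, c)$ and exact BIRKA applied to $(A+\Delta A, N+\Delta N, b+\Delta b, c+\Delta c)$ produce identical reduced models. It is enough to establish this equality iterate-by-iterate inside the \texttt{While} loop of Algorithm \ref{BIRKAAlgo}: if lines 3a--3e produce matching $\check{A}, \check{N}, \check{b}, \check{c}$ at every step, then the two runs of BIRKA evolve identically and terminate with the same reduced model, giving the first condition of backward stability.

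At a fixed iteration, I would focus on the two linear systems in lines 3b and 3c, writing 3b compactly as $\mathcal{K}\,z = f$ with $\mathcal{K} = -\Lambda \otimes I_n - I_r \otimes A - \check{\check{N}}^T \otimes N$ and $f = \check{\check{b}}^T \otimes b$. If the iterative solver is a Petrov-Galerkin scheme, the computed $\tilde z = vec(\mathbf{V})$ satisfies a Galerkin condition on the residual $r = f - \mathcal{K}\tilde z$. Standard backward error analysis then lets us write $\tilde z$ as the \emph{exact} solution of a perturbed system $(\mathcal{K} + \Delta \mathcal{K})\,\tilde z = f + \Delta f$, with $\Delta \mathcal{K}$ and $\Delta f$ expressible in terms of $r$ and the Petrov-Galerkin test space.

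The key technical step is to exercise the freedom in $\Delta \mathcal{K}$ and $\Delta f$ so that they come from a \emph{structured} perturbation of $A, N, b$. Since $\mathcal{K}$ depends linearly on $A$ and $N$, a perturbation $A \mapsto A + \Delta A$, $N \mapsto N + \Delta N$, $b \mapsto b + \Delta b$ induces $\Delta \mathcal{K} = -I_r \otimes \Delta A - \check{\check{N}}^T \otimes \Delta N$ and $\Delta f = \check{\check{b}}^T \otimes \Delta b$. I would select $\Delta A, \Delta N, \Delta b$ so that these induced quantities reproduce exactly the backward error furnished by the Petrov-Galerkin solve, and would run an analogous construction for line 3c to obtain perturbations compatible with $A^T \mapsto (A+\Delta A)^T$, $N^T \mapsto (N+\Delta N)^T$, $c \mapsto c + \Delta c$. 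With $\mathbf{V}$ and $\mathbf{W}$ now realised as exact solutions of linear systems built from the perturbed model, the orthogonalization in line 3d and the projection in lines 3e return precisely the same $\check{A}, \check{N}, \check{b}, \check{c}$ that exact BIRKA on the perturbed model would produce, closing the induction.

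The main obstacle will be the \emph{joint consistency} of the $A, N$ perturbations across the two linear solves: the residuals in the $\mathbf{V}$ and $\mathbf{W}$ equations arise independently, but the backward perturbations they induce must be interpretable as one common perturbation $(\Delta A, \Delta N)$ of the original matrices. Exploiting the degrees of freedom in the residual-based perturbation (in particular, the freedom to split $\Delta \mathcal{K}$ between the $I_r \otimes \Delta A$ and $\check{\check{N}}^T \otimes \Delta N$ summands, together with the flexibility from orthogonalization of $\mathbf{V}, \mathbf{W}$) is where the bulk of the work lies, and is the reason the theorem restricts attention to Petrov-Galerkin solvers rather than a generic iterative scheme.
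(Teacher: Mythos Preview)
The paper does not supply its own proof of this theorem: it is quoted verbatim from \cite{BIRKAstabilitylaapaper} as background, with no argument given here. The only related proof in the present paper is for the TBIRKA analogue (Theorem \ref{Theorem:TBIRKA_firstcondition}), and that proof reads in full: ``Follows the same pattern as the proof for Theorem 2.1 in \cite{BIRKAstabilitylaapaper}.'' So there is nothing in this paper to compare your proposal against line by line.

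That said, your overall strategy---cast the inexact solve as an exact solve of a perturbed system via the Petrov--Galerkin residual condition, then push the perturbation back onto the data $(A,N,b,c)$---is the standard route and is consistent with what the paper signals. Two remarks are worth making. First, the paper (and \cite{BIRKAstabilitylaapaper}, as reflected in Theorem \ref{Theorem:BIRKA_secondcondition} and in the discussion after Theorem \ref{Theorem:TBIRKA_firstcondition}) places the backward perturbation in $A$ alone, not in all of $A,N,b,c$; your more general ansatz is not wrong, but it is not what the referenced analysis does, and the second-condition bound in Theorem \ref{Theorem:BIRKA_secondcondition} is phrased only for a perturbation $F$ of $A$. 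Second, the ``joint consistency'' obstacle you flag---reconciling the backward perturbations coming from the $\mathbf V$-solve and the $\mathbf W$-solve into a single $(\Delta A,\Delta N)$---is evidently \emph{not} an obstruction in BIRKA: the theorem requires only the Petrov--Galerkin hypothesis and no extra orthogonality constraints. By contrast, the TBIRKA version (Theorem \ref{Theorem:TBIRKA_firstcondition}) \emph{does} need the additional conditions \eqref{eq:orthogonality_condition_theorem}. This strongly suggests that in the BIRKA case the single coupled Kronecker system in lines 3b--3c lets the Petrov--Galerkin residual be absorbed directly into a perturbation of $A$ without the cross-compatibility difficulty you anticipate; the difficulty reappears in TBIRKA only because the solve is split into $M$ separate stages. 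So your identification of the ``bulk of the work'' is likely misplaced for BIRKA itself.
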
   

\begin{theorem}\cite{BIRKAstabilitylaapaper}\label{Theorem:BIRKA_secondcondition}
	Let  
	$\widehat{Q} = \Biggl( -  \begin{bmatrix}
	A & 0\\ 
	0 & A
	\end{bmatrix}   \otimes    \begin{bmatrix}
	I_n & 0\\ 
	0 & I_n
	\end{bmatrix}     -  \begin{bmatrix}
	I_n & 0\\ 
	0 & I_n
	\end{bmatrix}   \otimes    \begin{bmatrix}
	A & 0\\ 
	0 & A
	\end{bmatrix}    -  \begin{bmatrix}
	N & 0\\ 
	0 & N
	\end{bmatrix}   \otimes    \begin{bmatrix}
	N & 0\\ 
	0 & N
	\end{bmatrix}        \Biggr)$, where $I_n$ is an identity matrix of size $n \times n$ and $\otimes$ denotes the standard Kronecker product. Also, let $\widehat{\widehat{F}} = \left(I_{2n} \otimes \widehat{F} + \widehat{F} \otimes I_{2n} \right)$ with $ \widehat{F} =\begin{bmatrix}
	0 & 0\\ 
	0 & F
	\end{bmatrix} $, where $F$ is the perturbation introduced in $A$ matrix of the input dynamical system and $I_{2n}$ is an identity matrix of size $2n \times 2n$. If $\widehat{Q}$
	is invertible, 	$\left \| \widehat{Q}^{-1} \right \|_2 < 1$, and
	$\left \| \widehat{\widehat{F}} \right \|_2 < 1$, then BIRKA satisfies the second condition of backward stability with
	respect to the inexact linear solves.
\end{theorem}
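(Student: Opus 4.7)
The goal is to bound the $H_2$-distance between the original full model $\zeta=(A,N,b,c)$ and the perturbed full model $\tilde{\zeta}=(A+F,N,b,c)$ by a quantity of order $\|F\|_2$. My plan is to realize the error system $\zeta-\tilde{\zeta}$ as an augmented bilinear system with block matrices
$\bar{A}=\mathrm{diag}(A,A+F)$, $\bar{N}=\mathrm{diag}(N,N)$, $\bar{b}=[b;\,b]$, $\bar{c}=[c,\,-c]$,
and then use the standard characterization of the bilinear $H_2$ norm through the reachability Gramian, $\|\zeta-\tilde{\zeta}\|_{H_2}^{2}=\bar{c}\bar{P}\bar{c}^{T}$, where $\bar{P}$ solves the generalized Lyapunov equation $\bar{A}\bar{P}+\bar{P}\bar{A}^{T}+\bar{N}\bar{P}\bar{N}^{T}+\bar{b}\bar{b}^{T}=0$. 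This identity must be linked to the Volterra-series definition in \eqref{eq:H_2normdefinitionsubsystem}; I would cite the bilinear-Gramian equivalence or derive it term-by-term on each $H_k$.

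Vectorizing the Lyapunov equation and writing $\bar{A}=\mathrm{diag}(A,A)+\widehat{F}$, the coefficient matrix of the resulting linear system for $\mathrm{vec}(\bar{P})$ collapses exactly to $-\widehat{Q}+\widehat{\widehat{F}}$, so that $\mathrm{vec}(\bar{P})=(\widehat{Q}-\widehat{\widehat{F}})^{-1}\mathrm{vec}(\bar{b}\bar{b}^{T})$. The three hypotheses --- invertibility of $\widehat{Q}$, $\|\widehat{Q}^{-1}\|_2<1$, and $\|\widehat{\widehat{F}}\|_2<1$ --- together force $\|\widehat{Q}^{-1}\widehat{\widehat{F}}\|_2<1$, so a convergent Neumann series expansion yields
$(\widehat{Q}-\widehat{\widehat{F}})^{-1}=\widehat{Q}^{-1}+\sum_{k\ge 1}(\widehat{Q}^{-1}\widehat{\widehat{F}})^{k}\widehat{Q}^{-1}.$

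The leading term $\widehat{Q}^{-1}\mathrm{vec}(\bar{b}\bar{b}^{T})$ corresponds to the Gramian of the augmented system with $F=0$; a short direct calculation shows that this unperturbed Gramian has the block form with all four blocks equal to the Gramian $P$ of $\zeta$, and therefore $[c,-c]\,\bar{P}_0\,[c,-c]^{T}=0$, reflecting $\zeta-\zeta=0$. The zeroth-order contribution thus cancels, so $\bar{c}\bar{P}\bar{c}^{T}$ picks up only the Neumann tail, which a geometric-series estimate shows is $\mathcal{O}(\|\widehat{\widehat{F}}\|_2)$. Unwinding $\widehat{\widehat{F}}=I_{2n}\otimes\widehat{F}+\widehat{F}\otimes I_{2n}$ and using $\|\widehat{F}\|_2=\|F\|_2$ converts this into the desired bound $\|\zeta-\tilde{\zeta}\|_{H_2}\le C\|F\|_2$, which is precisely the second condition of backward stability.

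The principal obstacle is the cancellation step: the zeroth-order contribution must be shown to vanish \emph{exactly}, not merely to be small, so that the final error is truly first order in $F$; this is what justifies the interpretation ``the order of perturbation'' in the informal statement of the second condition. Achieving this requires simultaneously exploiting (i) the specific antisymmetric structure of $\bar{c}$ and the symmetric duplication in $\bar{b}$, (ii) the block-diagonal placement of the perturbation $\widehat{F}$, and (iii) the Gramian-based reformulation of the bilinear $H_2$ norm. A secondary difficulty is ensuring that the Neumann-series constant $C$ remains bounded uniformly in the perturbation, which follows from $\|\widehat{Q}^{-1}\|_2\|\widehat{\widehat{F}}\|_2<1$ but must be tracked explicitly.
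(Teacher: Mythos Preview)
The paper does not supply its own proof of this theorem; it is quoted from the cited reference as background, so there is no in-paper argument to compare against. Your Gramian/Neumann-series route is nonetheless the natural one and is consistent with the closed-form expression for $\|\zeta-\tilde\zeta\|_{H_2}^{2}$ that the paper recalls immediately after equation~\eqref{eq:H_2_norm_error_system_truncated_Voterra}.

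There is, however, a genuine gap in your final step. Your cancellation argument shows only that the $k=0$ Neumann term vanishes, giving $\|\zeta-\tilde\zeta\|_{H_2}^{2}=\bar c\,\bar P\,\bar c^{T}=\mathcal O(\|\widehat{\widehat F}\|_2)=\mathcal O(\|F\|_2)$. Taking a square root then yields merely $\|\zeta-\tilde\zeta\|_{H_2}=\mathcal O(\|F\|_2^{1/2})$, not the $\mathcal O(\|F\|_2)$ bound you assert. To reach the correct order you must also show that the $k=1$ term $(\bar c\otimes\bar c)\,\widehat Q^{-1}\widehat{\widehat F}\,\widehat Q^{-1}(\bar b\otimes\bar b)$ vanishes. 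It does: since $\bar P_0$ has all four blocks equal to $P$, one computes $\widehat F\bar P_0+\bar P_0\widehat F^{T}$ to have $(1,1)$-block $0$, $(1,2)$-block $PF^{T}$, $(2,1)$-block $FP$, and $(2,2)$-block $FP+PF^{T}$; applying $\widehat Q^{-1}$ block-by-block (the unperturbed Lyapunov operator decouples over the four blocks) then produces a correction $X_1$ with $X_1^{11}=0$ and, by linearity, $X_1^{22}=X_1^{12}+X_1^{21}$. This ``sum structure'' forces $[c,-c]\,X_1\,[c,-c]^{T}=0$ exactly. Only from $k\ge 2$ does the block pattern break, and those terms contribute $\mathcal O(\|F\|_2^{2})$ to the squared norm, which is what is needed. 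Without this second cancellation your stated conclusion $\|\zeta-\tilde\zeta\|_{H_2}\le C\|F\|_2$ does not follow from the rest of your argument.
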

\par 
Next, we analyze the backward stability of TBIRKA. Here, the \textit{first condition} is satisfied in a way similar to that of BIRKA except that some extra orthogonality conditions are imposed on the linear solver (discussed below). 
\begin{theorem}
	\label{Theorem:TBIRKA_firstcondition}	
	Let the inexact linear solves in TBIRKA (lines 3b. and 3c. of Algorithm \ref{TBIRKAAlgo}) are solved using a Petrov-Galerkin framework with
	\begin{alignat}{2} \label{eq:orthogonality_condition_theorem}
	\sum_{j=1}^{M} W_j \perp R_{b_{j}} \quad \textnormal{and} \quad  \sum_{j=1}^{M} V_j \perp R_{c_{j}}, 
	\end{alignat}
	where $R_{b_{1}}$ and $R_{b_{j}}$ are the residuals in the first equations of 3b. and 3c. of Algorithm \ref{TBIRKAAlgo}, respectively; $R_{c_{1}}$ and $R_{c_{j}}$ are the residuals in the second equations of 3b. and 3c. of Algorithm \ref{TBIRKAAlgo}, respectively; and $j=2, \ \ldots, \ M.$  
	Then, TBIRKA satisfies the first condition of backward stability with respect to these solves.
\end{theorem}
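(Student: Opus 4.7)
The plan is to adapt the BIRKA first-condition argument of Theorem \ref{Theorem:BIRKA_firstcondition} to TBIRKA, using the aggregated orthogonality hypothesis \eqref{eq:orthogonality_condition_theorem} as the TBIRKA analogue of the Petrov-Galerkin condition. First I would rewrite each inexact solve in lines 3b and 3c with an explicit residual: on the primal side,
\begin{align*}
&(-\Lambda \otimes I_n - I_r \otimes A)\,vec(\widehat{\mathbf{V}}_1) = (\check{\check{b}}^T \otimes b) - vec(R_{b_1}),\\
&(-\Lambda \otimes I_n - I_r \otimes A)\,vec(\widehat{\mathbf{V}}_j) = (\check{\check{N}}^T \otimes N)\,vec(\widehat{\mathbf{V}}_{j-1}) - vec(R_{b_j}),\ j = 2,\ldots,M,
\end{align*}
and symmetrically on the dual side with residuals $R_{c_j}$. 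The inexact reduced system then comes from $\widehat{\mathbf{V}}_r = orth(\sum_{j=1}^M \widehat{\mathbf{V}}_j)$ and $\widehat{\mathbf{W}}_r = orth(\sum_{j=1}^M \widehat{\mathbf{W}}_j)$.

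Next I would introduce a perturbation $F$ of the state matrix $A$ and ask what exact TBIRKA applied to $(A + F, N, b, c)$ produces. Matching the exact perturbed iteration to the inexact original iteration term-by-term forces the Sylvester-type constraints $F\widehat{\mathbf{V}}_j = -R_{b_j}$ and $\widehat{\mathbf{W}}_j^T F = -R_{c_j}^T$ for each $j$. Stacking these as $F\,[\widehat{\mathbf{V}}_1\,|\,\cdots\,|\,\widehat{\mathbf{V}}_M] = -[R_{b_1}\,|\,\cdots\,|\,R_{b_M}]$ and similarly on the dual side, a compatible $F$ exists via a pseudoinverse construction because the stacked iterates have at most $Mr \ll n$ columns and because the cross-consistency $\widehat{\mathbf{W}}_j^T R_{b_k} = R_{c_j}^T \widehat{\mathbf{V}}_k$ is exactly what the Petrov-Galerkin framework provides at the solver level.

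Once $\widetilde{\mathbf{V}}_j = \widehat{\mathbf{V}}_j$ and $\widetilde{\mathbf{W}}_j = \widehat{\mathbf{W}}_j$ are established for every $j$, the summed quantities and their orthogonalizations satisfy $\widetilde{\mathbf{V}}_r = \widehat{\mathbf{V}}_r$ and $\widetilde{\mathbf{W}}_r = \widehat{\mathbf{W}}_r$. The only possible discrepancy between the two reduced matrices is then the term $(\widehat{\mathbf{W}}_r^T \widehat{\mathbf{V}}_r)^{-1} \widehat{\mathbf{W}}_r^T F \widehat{\mathbf{V}}_r$ appearing inside $\check{A}$. Writing $\widehat{\mathbf{V}}_r = \widehat{\mathbf{V}} S^{-1}$ and $\widehat{\mathbf{W}}_r = \widehat{\mathbf{W}} T^{-1}$ for the orthogonalization change-of-basis, and summing the per-$j$ primal constraints to $F \widehat{\mathbf{V}} = -R_b$ with $R_b = \sum_{j=1}^M R_{b_j}$, this term becomes $-T^{-T}\widehat{\mathbf{W}}^T R_b\, S^{-1}$. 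The hypothesis \eqref{eq:orthogonality_condition_theorem} summed over $j$ gives $\widehat{\mathbf{W}}^T R_b = 0$, so the discrepancy vanishes. The reduced $\check{N}, \check{b}, \check{c}$ are unchanged because only $A$ is perturbed and the projection bases coincide; the symmetric dual-side argument uses $\widehat{\mathbf{V}}^T R_c = 0$.

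The main technical obstacle is the simultaneous-compatibility step for a single $F$. The per-pair consistency $\widehat{\mathbf{W}}_j^T R_{b_k} = R_{c_j}^T \widehat{\mathbf{V}}_k$ required for the pseudoinverse construction above is strictly stronger than the aggregated orthogonality in the hypothesis, and in BIRKA of Theorem \ref{Theorem:BIRKA_firstcondition} it is trivial only because $M = 1$. If this per-pair version cannot be read off from the Petrov-Galerkin setup directly, I expect the cleanest resolution is to mirror the doubled-state construction used in Theorem \ref{Theorem:BIRKA_secondcondition}: embed the dynamics into $\mathbb{R}^{2n}$ and place the perturbation as a block $\begin{bmatrix}0 & 0\\ 0 & F\end{bmatrix}$, so that primal and dual constraints decouple across the two copies and only the aggregated orthogonalities in \eqref{eq:orthogonality_condition_theorem} are needed to close the argument.
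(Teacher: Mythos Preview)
Your approach is exactly the pattern the paper invokes: its entire proof is the single line ``Follows the same pattern as the proof for Theorem 2.1 in \cite{BIRKAstabilitylaapaper},'' with no further details. Your reconstruction---residual bookkeeping, matching the perturbed-exact and original-inexact iterates, and using \eqref{eq:orthogonality_condition_theorem} to kill the $\widehat{\mathbf{W}}_r^T F \widehat{\mathbf{V}}_r$ term---is precisely that BIRKA argument transported to the truncated setting.

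The compatibility issue you flag (that a single $F$ must satisfy $F\widehat{\mathbf{V}}_j = -R_{b_j}$ and $\widehat{\mathbf{W}}_j^T F = -R_{c_j}^T$ simultaneously for all $j$, which is strictly more than what \eqref{eq:orthogonality_condition_theorem} gives) is real and is glossed over by the paper's one-line proof. Your proposed doubled-state workaround is reasonable, but note that the paper's own subsequent discussion suggests a different intent: it points to the second-order linear analysis in \cite{NavSISC}, where analogous cross-orthogonalities are enforced \emph{by the solver} (via recycling, here RBiCG) rather than derived from the hypothesis. So the expected resolution is not a clever algebraic construction of $F$ from the aggregated conditions alone, but rather that the Petrov-Galerkin/recycling framework is assumed to deliver the finer per-pair orthogonalities needed, with \eqref{eq:orthogonality_condition_theorem} stated as the operative summary. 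Either route closes the gap; yours is more self-contained, the paper's is more of an appeal to the solver's properties.
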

\begin{proof}
	Follows the same pattern as the proof for Theorem 2.1 in \cite{BIRKAstabilitylaapaper}.
\end{proof}	
\par 
From the above theorem, we infer that the underlying iterative solver should \textit{firstly} be based upon a Petrov-Galerkin framework. Since BiConjugate Gradient (i.e., BiCG) is one such algorithm \cite{greenbaum_book,SaaditerativeBook}, we propose its use in TBIRKA. This is exactly same as for BIRKA. \textit{Secondly,} this particular solver should also satisfy the extra orthogonalities \eqref{eq:orthogonality_condition_theorem}.
\par 
These orthogonalities have a form similar to the orthogonalities required while reducing second order linear dynamical systems ((23) and (24) in \cite{NavSISC}), and can be easily satisfied by using a recycling variant of the underlying iterative solver. In \cite{NavSISC}, the ideal iterative solver to be used is Conjugate Gradient (i.e., CG) \cite{greenbaum_book,SaaditerativeBook} (due to the use of Galerkin projection). Hence, to satisfy the similar orthogonalities there, without any extra cost, the authors use Recycling Conjugate Gradient (i.e., RCG) \cite{Michael_RCG}. Since here BiCG is the ideal iterative solver (as discussed above), we propose the use of Recycling BiConjugate Gradient (i.e., RBiCG) \cite{ahuja2012recyclingsiampaper,kahujaphdthesis}, which would ensure that orthogonalities given by \eqref{eq:orthogonality_condition_theorem} are satisfied without any extra cost.
\par 
To satisfy the \textit{second condition} of backward stability of TBIRKA, we need to show that 
\begin{alignat}{2}\label{eq:second_condition_of_backward_stability_truncated_zeta}
\left \| \zeta^M - \widetilde{\zeta}^M \right \|_{H_2} = \mathcal{O} \left ( \left \| F \right \|_2 \right ),
\end{alignat}
where $\zeta^M$ is given by \eqref{eq:bilinear_series_representation_truncated} or 
\begin{subequations}
	\begin{align}
	&\zeta^M = \left\{ H_{1}\left ( s_{1} \right), \ H_{2}\left ( s_{1},\ s_{2} \right),\ \ldots, \ H_{M}\left ( s_{1},\ \ldots, \ s_{M} \right ) \right \},\label{eq:bilinear_series_representation_truncated_using_in_proof_original_system}\\
	\intertext{with $H_{k}\left ( s_{1},\ \ldots,\ s_{k} \right )$ for $k=1,\ \ldots, \ M$ given by \eqref{eq:subsystem_transfer_function} or}
	\begin{split}
	& H_{k}\left ( s_{1},\ \ldots,\ s_{k} \right ) =c\left ( s_{k}I-A \right )^{-1}\\
	&\qquad\qquad\qquad\qquad N \left ( s_{k-1}I-A \right )^{-1}\ \ldots  N\left ( s_{1}I-A \right )^{-1}b,\label{eq:susbsytem_transfer_function_in_proof_original}
	\end{split}
	\end{align}
\end{subequations}
\begin{subequations}
	\begin{align}
	&\widetilde{\zeta}^M = \left\{ \widetilde{H}_{1}\left ( s_{1} \right), \ \widetilde{H}_{2}\left ( s_{1},\ s_{2} \right),\  \ldots, \ \widetilde{H}_{M}\left ( s_{1},\ \ldots, \ s_{M} \right ) \right \},\label{eq:bilinear_series_representation_truncated_using_in_proof_perturbed_system}\\
	\intertext{with for $k=1,\ \ldots, \ M$}
	\begin{split}
	& \widetilde{H}_{k}\left ( s_{1},\ \ldots,\ s_{k} \right ) =c\left ( s_{k}I-\left(A+F\right) \right )^{-1}\\
	& \qquad\qquad\qquad\qquad N \left ( s_{k-1}I-\left(A+F\right) \right )^{-1}\ \ldots  N\left ( s_{1}I-\left(A+F\right) \right )^{-1}b,\label{eq:susbsytem_transfer_function_in_proof_perturbed}
	\end{split}
	\end{align}
\end{subequations}
and assuming perturbation in $A$ matrix of the input dynamical system (as for BIRKA stability; see Theorem \ref{Theorem:BIRKA_secondcondition} earlier).
\par 
One way to satisfy \eqref{eq:second_condition_of_backward_stability_truncated_zeta} is to use the definition of the $H_2-$norm of $\zeta^M-\widetilde{\zeta}^M$, i.e., from Lemma 5.1 of \cite{TBIRKApaper}
\begin{alignat}{4}
\begin{split} \label{eq:H_2_norm_error_system_truncated_Voterra}
& \left \| \zeta^M -\widetilde{\zeta}^M \right \|_{H_2}^{2}   = 
\left ( \begin{bmatrix}
c & -c
\end{bmatrix} \otimes \begin{bmatrix}
c & -c
\end{bmatrix} \right )    \sum_{j=0}^{M} \Biggl[ \biggl ( -  \begin{bmatrix}
A & 0\\ 
0 & A+F
\end{bmatrix}   \otimes          \\ 
& \quad \begin{bmatrix} 
I_n & 0\\ 
0 & I_n
\end{bmatrix} - \begin{bmatrix}
I_n & 0\\ 
0 & I_n
\end{bmatrix}   \otimes    \begin{bmatrix}
A & 0\\ 
0 & A+F
\end{bmatrix}       \biggr )^{-1} \begin{bmatrix}
N & 0 \\ 
0 & N
\end{bmatrix} \otimes \begin{bmatrix}
N & 0 \\ 
0 & N
\end{bmatrix}  \Biggr]^j   \\
& \quad \biggl ( -  \begin{bmatrix}
A & 0\\ 
0 & A+F
\end{bmatrix}   \otimes    \begin{bmatrix} 
I_n & 0\\ 
0 & I_n
\end{bmatrix}     -   \begin{bmatrix}
I_n & 0\\ 
0 & I_n
\end{bmatrix}   \otimes    \begin{bmatrix}
A & 0\\ 
0 & A+F
\end{bmatrix}       \biggr )^{-1}  \\ 
& \quad \left ( \begin{bmatrix}
b\\ 
b
\end{bmatrix} \otimes \begin{bmatrix}
b\\ 
b
\end{bmatrix}  \right ). 
 \end{split}
\end{alignat}
This approach is followed in satisfying the \textit{second condition} of backward stability of BIRKA, but for TBIRKA it turns out to be more challenging. The reason for this is that the definition of the $H_2-$norm of $\zeta-\widetilde{\zeta}$ used in BIRKA is different from \eqref{eq:H_2_norm_error_system_truncated_Voterra}\footnote{Recall, in BIRKA we work with $\zeta$ rather than $\zeta^M$.}, i.e., from Corollary 4.1 of \cite{BIRKApaper} or Theorem 4.5 of \cite{garretphdthesis}
\begin{alignat*}{4}
& \left \| \zeta -\widetilde{\zeta} \right \|_{H_2}^{2}   = 
\left ( \begin{bmatrix}
c & -c
\end{bmatrix} \otimes \begin{bmatrix}
c & -c
\end{bmatrix} \right )    
\biggl ( -  \begin{bmatrix}
A & 0\\ 
0 & A+F
\end{bmatrix}   \otimes    \begin{bmatrix} 
I_n & 0\\ 
0 & I_n
\end{bmatrix}     -     \\
& \qquad  \begin{bmatrix}
I_n & 0\\ 
0 & I_n
\end{bmatrix}  \otimes  \begin{bmatrix}
A & 0\\ 
0 & A+F
\end{bmatrix}    
-   \begin{bmatrix}
N & 0\\ 
0 & N
\end{bmatrix}   \otimes    \begin{bmatrix}
N & 0\\ 
0 & N
\end{bmatrix}          \biggr )^{-1}  
\left ( \begin{bmatrix}
b\\ 
b
\end{bmatrix} \otimes \begin{bmatrix}
b\\ 
b
\end{bmatrix}  \right ). \notag
\end{alignat*}
\par 
Another way to satisfy \eqref{eq:second_condition_of_backward_stability_truncated_zeta} in case of TBIRKA, which turns out to be more easier, is to show that
\begin{alignat}{2}\label{eq:second_condition_of_backward_stability_series}
\begin{split}
\left \| H_{1}\left ( s_{1} \right ) - \widetilde{H}_{1}\left ( s_{1} \right )  \right \|_{H_{2}} & \propto  \mathcal{O}\left ( \left \| F \right \|_2 \right ), \\
\left \| H_{2}\left ( s_{1},\ s_{2} \right ) - \widetilde{H}_{2}\left ( s_{1},\ s_{2} \right )  \right \|_{H_{2}} &\propto  \mathcal{O}\left ( \left \| F \right \|_2 \right ), \\
& \vdots\\
\left \| H_{M}\left ( s_{1},\ \ldots,\ s_{M} \right ) - \widetilde{H}_{M}\left ( s_{1},\ \ldots,\ s_{M} \right )  \right \|_{H_{2}} &\propto  \mathcal{O}\left ( \left \| F \right \|_2 \right ),
\end{split}
\end{alignat}
where $ H_{k}\left ( s_{1},\ \ldots,\ s_{k} \right )$ for $k=1, \ \ldots,\ M$ is given by \eqref{eq:subsystem_transfer_function} and \eqref{eq:susbsytem_transfer_function_in_proof_original}, and $\widetilde{H}_{k}\left ( s_{1},\ \ldots,\ s_{k} \right )$ for $k=1, \ \ldots,\ M$ is given by \eqref{eq:susbsytem_transfer_function_in_proof_perturbed}. This way was not possible in BIRKA because there $M \rightarrow \infty$ (see \eqref{eq:originalbilinearequation}-\eqref{eq:subsystem_transfer_function}).
\section{Satisfying the Second Condition of Backward Stability} \label{sec:Stability_Analysis}
\par 
We use induction to prove \eqref{eq:second_condition_of_backward_stability_series}.
To prove this condition, we first abstract out the term containing the perturbation $F$ from the normed difference between the transfer function of the original system $\left(  H_{M}\left ( s_{1},\ \ldots,\ s_{M} \right ) \right)$ and the transfer function of the perturbed system  $\left( \widetilde{H}_{M}\left ( s_{1},\ \ldots,\ s_{M} \right ) \right)$ (in Lemma \ref{Lemma:Mth_term_H2_norm}). Next, we use induction to prove that this abstracted term is $\mathcal{O}\left ( \left \| F \right \|_2 \right )$ (in Lemma \ref{Lemma:induction_proof}). Finally, we use the result of these two lemmas to prove \eqref{eq:second_condition_of_backward_stability_series} (in Theorem \ref{Theorem:second_condition_subsystem}). Note, that in all our subsequent derivations, we assume that all inverses used exist. This is an 
acceptable assumption because the inverse of matrices arising here are of the form as in \cite{sarahinexactlaapaper} and \cite{BIRKAstabilitylaapaper} (the papers that discuss stability of IRKA and BIRKA, respectively).
\begin{lemma} \label{Lemma:Mth_term_H2_norm}
If 
\begin{align*}
H_{M}\left ( s_{1},\ \ldots,\ s_{M} \right ) =c\left ( s_{M}I_n-A \right )^{-1}N \left ( s_{M-1}I_n-A \right )^{-1}      \ldots N\left ( s_{1}I_n-A \right )^{-1}b
\end{align*}
 and 
 \begin{align*}
 \widetilde{H}_{M}\left ( s_{1},\ \ldots,\ s_{M} \right )  = & c\left ( s_{M}I_n-\left ( A+F \right ) \right )^{-1} \\
 & \ N \left ( s_{M-1}I_n- \left ( A+F \right ) \right )^{-1} \ldots   N\left ( s_{1}I_n- \left ( A+F \right ) \right )^{-1}b,
 \end{align*} then the $H_2-$norm of their difference 
	\begin{alignat*}{3} 
	&\left \| H_{M}\left ( s_{1},\ \ldots,\ s_{M} \right ) - \widetilde{H}_{M}\left ( s_{1},\ \ldots,\ s_{M} \right )  \right \|_{H_{2}}^{2} \leq   \left \| c \mathcal{K}^{-1}\left(s_{M}\right) \right \|_{H_{2}}^{2} \\ 
	 &\quad \left \| \mathcal{K}^{-1}\left(s_{M-1}\right) \right \|_{H_{2}}^{2}  \ldots   \left \| \mathcal{K}^{-1}\left(s_{1}\right) \right \|_{H_{2}}^{2} \left \| U(s_1,\ \ldots,\ s_M) \right \|_{H_{\infty}}^{2}  \left \| \mathcal{K}^{-1}\left(s_1\right)b \right \|_{H_{\infty}}^{2}
	\end{alignat*}
	where, $	\mathcal{K}\left(s_i\right)=\left ( s_{i}I_{n}-A \right ) $ for $i= 1, \ \ldots, \ M$ and 
	\begin{alignat}{2} \label{eq:U_expression}
	\begin{split}
	& U(s_1, \ldots,\ s_{M}) = \mathcal{K}\left ( s_1 \right )\ldots \mathcal{K}\left ( s_{M-1} \right ) 
	\Bigg( N \mathcal{K}^{-1}\left(s_{M-1}\right) \ldots N\mathcal{K}^{-1}\left(s_2\right) N   \\
	&\qquad -\left ( I_n- F \mathcal{K}^{-1}\left(s_M\right) \right )^{-1}   N \mathcal{K}^{-1}\left(s_{M-1}\right) \left ( I_n- F \mathcal{K}^{-1}\left(s_{M-1}\right) \right )^{-1} \ldots \\
	&\qquad\quad   N \mathcal{K}^{-1}\left(s_2\right) \left ( I_n- F \mathcal{K}^{-1}\left(s_2\right) \right )^{-1}  N  \left ( I_n- \mathcal{K}^{-1}\left(s_1\right) F\right )^{-1}\Bigg).
	\end{split}
	\end{alignat}
\end{lemma}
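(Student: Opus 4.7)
The plan is first to establish the exact algebraic identity
\[
H_M(s_1,\ldots,s_M)-\widetilde{H}_M(s_1,\ldots,s_M)
  = c\,\mathcal{K}^{-1}(s_M)\,\mathcal{K}^{-1}(s_{M-1})\cdots\mathcal{K}^{-1}(s_1)\,U(s_1,\ldots,s_M)\,\mathcal{K}^{-1}(s_1)\,b,
\]
so that the definition of $U$ in \eqref{eq:U_expression} is precisely what makes this identity hold, and then to bound the right-hand side factor-by-factor and integrate.

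To obtain the identity I would start from the two resolvent factorisations
\[
(s_i I_n - (A+F))^{-1} \;=\; \mathcal{K}^{-1}(s_i)\bigl(I_n-F\mathcal{K}^{-1}(s_i)\bigr)^{-1} \;=\; \bigl(I_n-\mathcal{K}^{-1}(s_i)F\bigr)^{-1}\mathcal{K}^{-1}(s_i),
\]
picking the \emph{left} form for $i\ge 2$ (so that $\mathcal{K}^{-1}(s_M)$ splits off on the left as part of $c\mathcal{K}^{-1}(s_M)$) and the \emph{right} form for $i=1$ (so that $\mathcal{K}^{-1}(s_1)$ splits off on the right as part of $\mathcal{K}^{-1}(s_1)b$). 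With this choice $\widetilde{H}_M$ rewrites as $c\,\mathcal{K}^{-1}(s_M)\,Y\,\mathcal{K}^{-1}(s_1)b$, where $Y$ coincides with the subtracted term in the bracket of $U$; the trivial analogous factoring of $H_M$ contributes the first term. Inserting $I_n = \mathcal{K}(s_j)\mathcal{K}^{-1}(s_j)$ for $j=1,\ldots,M-1$ and using that the $\mathcal{K}(\cdot)$'s commute (they are all rational functions of the single matrix $A$) then extracts the prefactor $\mathcal{K}^{-1}(s_{M-1})\cdots\mathcal{K}^{-1}(s_1)$, leaving exactly $U$ as in \eqref{eq:U_expression}.

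For the norm step I would evaluate the identity at $s_j=i\omega_j$. Each of $c\mathcal{K}^{-1}(s_M)$, $\mathcal{K}^{-1}(s_{M-1})$, \ldots, $\mathcal{K}^{-1}(s_1)$ depends on a single distinct frequency, whereas $U$ and $\mathcal{K}^{-1}(s_1)b$ absorb the remaining dependence on $\omega_1,\ldots,\omega_M$. Since the left-hand side is scalar, $|H_M-\widetilde{H}_M|$ is bounded pointwise, by sub-multiplicativity of the spectral norm, by the product of spectral norms of the individual factors (noting that for the row vector $c\mathcal{K}^{-1}(s_M)$ and the column vector $\mathcal{K}^{-1}(s_1)b$ the spectral and Frobenius norms coincide). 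Bounding $\|U(i\omega_1,\ldots,i\omega_M)\|_2\le\|U\|_{H_\infty}$ and $\|\mathcal{K}^{-1}(i\omega_1)b\|_2\le\|\mathcal{K}^{-1}(s_1)b\|_{H_\infty}$ pulls those two factors outside the integral; squaring and passing to Frobenius norms via $\|\cdot\|_2\le\|\cdot\|_F$ to match the definition \eqref{eq:H_2normdefinitionsubsystem} then lets Fubini separate the $M$-fold integral into the product of single-variable $H_2$-norms, yielding the claimed bound.

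The hard part will be the bookkeeping in the algebraic step: the asymmetric choice of resolvent factorisation (right for $i=1$, left for $i\ge 2$) is precisely what produces the $(I_n-F\mathcal{K}^{-1}(\cdot))^{-1}$ factors for $i\ge 2$ together with the lone $(I_n-\mathcal{K}^{-1}(s_1)F)^{-1}$ for $i=1$ appearing in \eqref{eq:U_expression}. Verifying that the inserted identities $\mathcal{K}(s_j)\mathcal{K}^{-1}(s_j)=I_n$, combined with the commutativity of the $\mathcal{K}(\cdot)$'s, reproduce the prefactor $\mathcal{K}^{-1}(s_{M-1})\cdots\mathcal{K}^{-1}(s_1)$ with no surplus or missing terms requires careful ordering; once this identity is pinned down, the norm estimate is a routine application of standard inequalities.
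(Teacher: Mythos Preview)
Your proposal is correct and follows essentially the same route as the paper: the paper, too, factors the perturbed resolvents via $(\mathcal{K}(s_i)-F)^{-1}=\mathcal{K}^{-1}(s_i)(I_n-F\mathcal{K}^{-1}(s_i))^{-1}$ for $i\ge 2$ and the right variant for $i=1$, inserts $\mathcal{K}(s_j)\mathcal{K}^{-1}(s_j)=I_n$ to peel off the prefactor $\mathcal{K}^{-1}(s_{M-1})\cdots\mathcal{K}^{-1}(s_1)$, and then bounds factor by factor using $\|XY\|_F\le\|X\|_F\|Y\|_F$ and $\|YZ\|_F\le\|Y\|_F\|Z\|_2$. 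The only cosmetic difference is that where you bound $\|U\|_2$ and $\|\mathcal{K}^{-1}(s_1)b\|_2$ pointwise by their $H_\infty$ suprema and invoke Fubini, the paper phrases the extraction of these sup-factors via a ``mean value theorem of integration''; the two arguments are equivalent.
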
	
\begin{proof}
	Using the definition of $H_2-$norm \eqref{eq:H_2normdefinitionsubsystem}, we get
	\begin{alignat}{2}
	& \left \| H_{M}\left ( s_{1},\ \ldots,\ s_{M} \right ) - \widetilde{H}_{M}\left ( s_{1},\ \ldots,\ s_{M} \right )  \right \|_{H_{2}}^{2}  \notag\\
	& = \left(\frac{1}{2\pi }\right)^M  \underset{m\rightarrow \infty}{Lim} \int_{-m}^{m} \ldots \int_{-m}^{m}  
	\left \|c \mathcal{K}^{-1}\left(i\omega_M\right) N \mathcal{K}^{-1}\left(i\omega_{M-1}\right)\ldots  N\mathcal{K}^{-1}\left(i\omega_1\right)b  \ 
	\right.\notag\\
	&  \quad - c \left(\mathcal{K}\left ( i\omega_{M} \right)-F\right )^{-1}  N \left(\mathcal{K}\left ( i\omega_{M-1} \right)-F\right )^{-1} \ldots N \left(\mathcal{K}\left ( i\omega_{2} \right)-F\right )^{-1}  \notag\\
	& \qquad \left.  N \left(\mathcal{K}\left ( i\omega_{1}\right)-F\right )^{-1}b \right \|_{F}^{2}   d\omega_{1} \ldots  d\omega_{M} \notag
	\end{alignat}
	\begin{alignat}{2}
	& = \left(\frac{1}{2\pi }\right)^M \underset{m\rightarrow \infty}{Lim} \int_{-m}^{m} \ldots \int_{-m}^{m}   \left \| c \mathcal{K}^{-1}\left(i\omega_M\right)\Bigg(
	N \mathcal{K}^{-1}\left(i\omega_{M-1}\right)\ldots  N \mathcal{K}^{-1}\left(i\omega_2\right)  N  \ 
	\right.\notag\\
	&  \quad  -\left(I_n-F \mathcal{K}^{-1}\left(i\omega_M\right)\right )^{-1}  N \mathcal{K}^{-1}\left(i\omega_{M-1}\right)\left(I_n-F \mathcal{K}^{-1}\left(i\omega_{M-1}\right)\right )^{-1} \ldots\notag\\
	& \qquad \left.  N \mathcal{K}^{-1}\left(i\omega_2\right)\left(I_n-F \mathcal{K}^{-1}\left(i\omega_2\right)\right )^{-1} 
	 N \left(I_n- \mathcal{K}^{-1}\left(i\omega_1\right)F\right )^{-1}\Bigg) \mathcal{K}^{-1}\left(i\omega_1\right) b   \right \|_{F}^{2}  \notag \\
	 & \qquad d\omega_{1} \ldots  d\omega_{M} \notag\\
	\begin{split}\notag
	& = \left(\frac{1}{2\pi }\right)^M \underset{m\rightarrow \infty}{Lim} \int_{-m}^{m} \ldots \int_{-m}^{m}   \left \| c \mathcal{K}^{-1}\left(i\omega_M\right) \mathcal{K}^{-1}\left(i\omega_{M-1}\right) \ldots \mathcal{K}^{-1}\left(i\omega_1\right) \right.\\
	&\qquad \mathcal{K}\left(i\omega_{1}\right)\ldots \mathcal{K}\left(i\omega_{M-1}\right) \Bigg(
	N \mathcal{K}^{-1}\left(i\omega_{M-1}\right)\ldots  N \mathcal{K}^{-1}\left(i\omega_2\right)  N \ 
	\\
	& \quad - \left(I_n-F \mathcal{K}^{-1}\left(i\omega_M\right)\right )^{-1}  N \mathcal{K}^{-1}\left(i\omega_{M-1}\right)\left(I_n-F \mathcal{K}^{-1}\left(i\omega_{M-1}\right)\right )^{-1} \ldots \\
	& \qquad \left. N \mathcal{K}^{-1}\left(i\omega_2\right)\left(I_n-F \mathcal{K}^{-1}\left(i\omega_2\right)\right )^{-1}
	 N \left(I_n- \mathcal{K}^{-1}\left(i\omega_1\right)F\right )^{-1}\Bigg) \mathcal{K}^{-1}\left(i\omega_1\right) b   \right \|_{F}^{2}\\
	 & \qquad    d\omega_{1} \ldots  d\omega_{M}. 		 
	\end{split}
	\end{alignat}
	Using $U\left(s_1, \ \ldots, \ s_M\right)$ given by \eqref{eq:U_expression}, $\left \| X Y Z \right \|_{F}   \leq \left \| X \right \|_{F}  \left \| Y Z \right \|_{F}$, $\left \|  Y Z \right \|_{F}   \leq \left \| Y \right \|_{F}  \left \| Z \right \|_{2}$, and comparison integral inequality\footnote{\label{footnote1}This inequality says if $f\left(x\right)$ and $g\left(x\right)$ are integrable over $\left[a,b\right]$ and $f\left(x\right) \leq g\left(x\right)$, then $\int_{a}^{b} f \left ( x \right ) dx \leq \int_{a}^{b} g \left ( x \right ) dx$. Note that although we have improper integrals here, this inequality still holds because of the earlier assumption that such integrals give a finite value.} \cite{Alen_Jeffrey__mean_value_theorem_book} for any matrices $X,\ Y$ and $Z$, in the above equation, we have 
	\begin{alignat}{2}\label{eq:Mth_error_term_H2_norm_intermediate_limit}
	\begin{split}
	& \left \| H_{M}\left ( s_{1},\ \ldots,\ s_{M} \right ) - \widetilde{H}_{M}\left ( s_{1},\ \ldots,\ s_{M} \right )  \right \|_{H_{2}}^{2} \\
	&\quad \leq  \left(\frac{1}{2\pi }\right)^M \underset{m\rightarrow \infty}{Lim}  
	\int_{-m}^{m} \ldots \int_{-m}^{m}   \left \| c \mathcal{K}^{-1}\left(i\omega_M\right)  \right\|_F^2  \left \|  \mathcal{K}^{-1}\left(i\omega_{M-1}\right)  \right\|_F^2 \ldots  \\ 
	& \qquad  \left \|  \mathcal{K}^{-1}\left(i\omega_1\right) \right\|_F^2 \left \| U \left(i\omega_{1}, \ \ldots, \ i \omega_{M}\right) \right\|_2^2  \left \|  \mathcal{K}^{-1}\left(i\omega_1\right) b \right\|_2^2    d\omega_{1} \ldots d\omega_{M}.  	
	\end{split}
	\end{alignat}
	From the mean value theorem of integration \cite{Alen_Jeffrey__mean_value_theorem_book} we know
	\begin{alignat}{4}
	\int_{-m}^{m} \int_{-m}^{m}  f\left ( i\omega_{2} \right ) & g\left ( i\omega _{1},i\omega _{2} \right ) h\left ( i\omega _{1} \right )  d\omega _{1}d\omega _{2} \notag\\
	= & \int_{-m}^{m}  f\left ( i\omega_{2} \right )   \left ( \int_{-m}^{m}  g\left ( i\omega _{1},i\omega _{2} \right ) h\left ( i\omega _{1} \right )d\omega _{1}  \right)   d\omega _{2}\notag
	\end{alignat}
	\begin{alignat}{2}
	\leq & \int_{-m}^{m}  f\left ( i\omega_{2} \right )   \left (  \underset{c\in \mathbb{R}}{max}\left ( g(ic,i\omega _{2}) \right ) \int_{-m}^{m}   h\left ( i\omega _{1} \right )d\omega _{1}  \right)   d\omega _{2} \notag\\
	\leq & \left (    \int_{m}^{m}  f\left ( i\omega_{2} \right )     \underset{c\in \mathbb{R}}{max}\left ( g(ic,i\omega _{2}) \right ) d\omega _{2} \right ) \int_{-m}^{m}   h\left ( i\omega _{1} \right )d\omega _{1}   \notag\\
	\leq & \ \underset{c,d\in \mathbb{R}}{max} (g(ic,id))   \int_{-m}^{m}  f\left ( i\omega_{2} \right )     d\omega _{2} \int_{-m}^{m}   h\left ( i\omega _{1} \right )d\omega _{1}.  \notag
	\end{alignat}
	Using this property in \eqref{eq:Mth_error_term_H2_norm_intermediate_limit} we get\footnote{As mentioned in Footnote \ref{footnote1}, the improper integral does not affect application of this mean value theorem because all such integrals are assumed to give a finite value. }
	\begin{alignat*}{2}
	& \left \| H_{M}\left ( s_{1},\ \ldots,\ s_{M} \right ) - \widetilde{H}_{M}\left ( s_{1},\ \ldots,\ s_{M} \right )  \right \|_{H_{2}}^{2} \\
	&\qquad \leq  \left(\frac{1}{2\pi }\right)^M \underset{m\rightarrow \infty}{Lim}  
	\int_{-m}^{m} \ldots \int_{-m}^{m}   \left \| c \mathcal{K}^{-1}\left(i\omega_M\right)  \right\|_F^2  \left \|  \mathcal{K}^{-1}\left(i\omega_{M-1}\right)  \right\|_F^2 \ldots\\
	& \qquad\qquad  \left \|  \mathcal{K}^{-1}\left(i\omega_1\right) \right\|_F^2  d\omega_{1} \ldots d\omega_{M}  \underset{\omega_1,\ \ldots,\ \omega_M \in \mathbb{R}}{max} \left \| U \left(i\omega_{1}, \ \ldots, \ i \omega_{M}\right) \right\|_2^2 \ \\
	&\qquad\qquad  \underset{\omega_1 \in \mathbb{R}}{max} \left \|  \mathcal{K}^{-1}\left(i\omega_1\right) b \right\|_2^2  \\
	&\qquad \leq   \left \| c \mathcal{K}^{-1}\left(s_M\right)  \right\|_{H_2}^2 \left \|  \mathcal{K}^{-1}\left(s_{M-1}\right)  \right\|_{H_2}^2 \ldots \left \|  \mathcal{K}^{-1}\left(s_1\right)  \right\|_{H_2}^2 \\
	& \qquad\qquad  \left \| U \left(s_{1}, \ \ldots, \ s_{M}\right) \right\|_{H_{\infty}}^2  \left \| \mathcal{K}^{-1}\left(s_1\right) b \right\|_{H_{\infty}}^2.  	
	\end{alignat*}
\end{proof}
\begin{lemma} \label{Lemma:induction_proof}
	If $U_{M} = U(s_1,\ \ldots,\ s_{M}) $ defined in \eqref{eq:U_expression}, $\left \| F \right \|_2 < 1 $, and\\ $\left \| \mathcal{K}^{-1} \left ( s \right ) \right \|_{H_{\infty}}<1$, where $\mathcal{K}\left(s\right) = \left(s I_n - A\right)$.
	Then we have
	\begin{align*}
	\left\| U_{M} \right\|_{H_{\infty}}  \propto \mathcal{O} \left( \left\| F \right\|_2  \right).
	\end{align*}
\end{lemma}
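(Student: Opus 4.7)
The plan is to proceed by induction on $M$, with the Neumann-type identity $(I - Y)^{-1} = I + Y(I - Y)^{-1}$ as the main algebraic tool. Under the hypotheses $\left\|F\right\|_2 < 1$ and $\left\|\mathcal{K}^{-1}(s)\right\|_{H_\infty} < 1$, we obtain $\left\|F\mathcal{K}^{-1}(s)\right\|_{H_\infty} < 1$ and $\left\|\mathcal{K}^{-1}(s)F\right\|_{H_\infty} < 1$, so the corresponding Neumann series converge and each of $(I - F\mathcal{K}^{-1}(s_i))^{-1}$ and $(I - \mathcal{K}^{-1}(s_1)F)^{-1}$ is uniformly $H_\infty$-bounded by $\bigl(1 - \left\|F\right\|_2 \left\|\mathcal{K}^{-1}\right\|_{H_\infty}\bigr)^{-1}$. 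These factors can therefore be treated as uniformly bounded constants in all subsequent estimates.

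For the base case $M = 2$, I would expand both inverses in $U_2 = \mathcal{K}(s_1)\bigl[N - (I - F\mathcal{K}^{-1}(s_2))^{-1} N (I - \mathcal{K}^{-1}(s_1)F)^{-1}\bigr]$ using the Neumann identity once on each factor. The $I \cdot N \cdot I$ contribution cancels the leading $N$, leaving three pieces, each carrying an explicit factor of $F$. The outer $\mathcal{K}(s_1)$ is then paired with the adjacent $\mathcal{K}^{-1}(s_1)$ via the commutator identity $\mathcal{K}(s) N \mathcal{K}^{-1}(s) = N - [A, N]\mathcal{K}^{-1}(s)$, whose right-hand side is uniformly bounded by $\left\|N\right\|_2 + \left\|[A,N]\right\|_2 \left\|\mathcal{K}^{-1}\right\|_{H_\infty}$. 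Submultiplicativity of $\left\|\cdot\right\|_{H_\infty}$ applied to each surviving piece then yields $\left\|U_2\right\|_{H_\infty} \leq C_2 \left\|F\right\|_2$.

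For the inductive step, I would assume $\left\|U_{M-1}\right\|_{H_\infty} = \mathcal{O}(\left\|F\right\|_2)$ and apply the Neumann identity to the outermost factor $(I - F\mathcal{K}^{-1}(s_M))^{-1}$ in the definition of $U_M$. This splits $U_M$ into two pieces. The first, coming from the identity part of the expansion, collapses after the pairing $\mathcal{K}(s_{M-1}) \cdot N\mathcal{K}^{-1}(s_{M-1}) = N - [A,N]\mathcal{K}^{-1}(s_{M-1})$ into something with the structure of $U_{M-1}$ multiplied by uniformly $H_\infty$-bounded factors, and is therefore $\mathcal{O}(\left\|F\right\|_2)$ by the inductive hypothesis. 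The second, coming from the $F\mathcal{K}^{-1}(s_M) (I - F\mathcal{K}^{-1}(s_M))^{-1}$ part, contains an explicit factor of $F$, and is $\mathcal{O}(\left\|F\right\|_2)$ directly after the same commutator cancellation used in the base case. Summing the two pieces gives $\left\|U_M\right\|_{H_\infty} \leq C_M \left\|F\right\|_2$.

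The hard part is the bookkeeping around the outer prefactor $\mathcal{K}(s_1)\cdots\mathcal{K}(s_{M-1})$, which is unbounded as $|s_i| \to \infty$ and therefore cannot be estimated in $H_\infty$ on its own. Each $\mathcal{K}(s_i)$ must be commuted rightward past the intervening $N$'s until it reaches a matching $\mathcal{K}^{-1}(s_i)$ inside the bracket; each such commutation produces a commutator $[\mathcal{K}(s_i), N] = -[A, N]$, and one has to verify that after all the commutations every surviving monomial is both $H_\infty$-bounded and still manifestly carries at least one factor of $F$. Organising this nested Neumann expansion so that the inductive hypothesis applies cleanly, without leaving any residual $s_i$-growth in any term, is the delicate step the proof has to execute.
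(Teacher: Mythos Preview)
Your inductive skeleton matches the paper's: expand the outermost factor via the Neumann identity $(I-Y)^{-1}=I+Y(I-Y)^{-1}$, recognise the identity piece as containing the previous $U$, and bound the explicit-$F$ remainder directly. The paper carries out exactly this recursion (it even includes $M=3$ as an extra base case before the hypothesis), and its expression of $U_{M+1}$ in terms of $U_M$ goes through the same substitution you describe.

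Where you diverge is in the handling of the prefactor $\mathcal{K}(s_1)\cdots\mathcal{K}(s_{M-1})$. The paper does \emph{not} use any commutator identity. It simply applies submultiplicativity and records $\left\|\mathcal{K}(s_1)\right\|_{H_\infty},\ldots,\left\|\mathcal{K}(s_{M-1})\right\|_{H_\infty}$ as multiplicative constants in the final bound for $\left\|U_M\right\|_{H_\infty}$; these are then absorbed into the $\mathcal{O}(\left\|F\right\|_2)$ constant. No pairing of $\mathcal{K}(s_i)$ with $\mathcal{K}^{-1}(s_i)$ is attempted before taking norms.

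Your commutator scheme is more ambitious, but as sketched it does not close. Already at $M=2$, after your three-term split of $N-(I-F\mathcal{K}^{-1}(s_2))^{-1}N(I-\mathcal{K}^{-1}(s_1)F)^{-1}$, the piece
\[
\mathcal{K}(s_1)\,F\mathcal{K}^{-1}(s_2)\bigl(I-F\mathcal{K}^{-1}(s_2)\bigr)^{-1}N
\]
contains no $\mathcal{K}^{-1}(s_1)$ anywhere; the only resolvent present is in the variable $s_2$, and $\mathcal{K}(s_1)F\mathcal{K}^{-1}(s_2)$ is not uniformly bounded in $(s_1,s_2)$. The identity $\mathcal{K}(s)N\mathcal{K}^{-1}(s)=N-[A,N]\mathcal{K}^{-1}(s)$ is of no help here because the indices do not match. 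The same obstruction recurs in the inductive step: the Neumann remainder from $(I-F\mathcal{K}^{-1}(s_M))^{-1}$ introduces $\mathcal{K}^{-1}(s_M)$, not $\mathcal{K}^{-1}(s_1)$, so the leftmost prefactor $\mathcal{K}(s_1)$ is again unmatched in that branch. If you want to reproduce the paper's argument, drop the commutators and carry the $\left\|\mathcal{K}(s_i)\right\|_{H_\infty}$ factors as constants; if you want an honest cancellation of the unbounded prefactors, you will need a different decomposition than the one you have outlined.
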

\begin{proof}
	We prove this by mathematical induction.\\
	\underline{\textbf{Base Case}}
	\begin{enumerate} [label=(\roman*)]
		\item \textit{First subsystem}\\
		This is the linear system case, which has been already proved in \cite{sarahinexactlaapaper} (see Theorem 4.3 of \cite{sarahinexactlaapaper}).
		\item \textit{Second subsystem} \\
		Substituting $M=2$ in \eqref{eq:U_expression}, we get
		\begin{align*}	  
		U_2  = \mathcal{K}\left(s_1\right)\left(N- \left ( I_n-F\mathcal{K}^{-1} \left(s_2\right) \right )^{-1} N \left ( I_n- \mathcal{K}^{-1} \left(s_1\right)  F \right )^{-1}  \right).   
		\end{align*}
		If $ \left\| F\mathcal{K}^{-1} \left(s_2\right)\right\|_{H_{\infty}} < 1$ and $ \left\| \mathcal{K}^{-1} \left(s_1\right) F\right\|_{H_{\infty}}<1$, then by the Neumann series, we get\footnote{From \cite[page 527]{Matrix_Analysis_carl_meyer_book}, we know $\left ( I - A \right )^{-1} = \sum\limits_{k=0}^{\infty} A^k$ when $ \left \| A \right \|<1$ for any matrix norm. Here, for the first inequality we have $\left \| F \mathcal{K}^{-1} \left ( s_2 \right ) \right \|_{H_{\infty}} <1$ or $\underset{\omega_2 \in \mathbb{R}}{max}  \left \| F \mathcal{K}^{-1} \left ( i\omega_2 \right ) \right \|_2 <1$, and hence, the applicable matrix norm is $2-$norm. Similarly for the second inequality.} 
		\begin{alignat*}{2}	  
		U_2  &= \mathcal{K}\left(s_1\right)\bigg(N- \left ( I_n+F\mathcal{K}^{-1}\left(s_2\right) + \left(F\mathcal{K}^{-1}\left(s_2\right)\right)^2 +\cdots \right ) N \\
		&\quad \left ( I_n+ \mathcal{K}^{-1} \left(s_1\right)  F + \left(\mathcal{K}^{-1} \left(s_1\right)  F\right)^2 +\cdots \right )  \bigg)\\
		&= \mathcal{K}\left(s_1\right)\bigg(N- N-N\mathcal{K}^{-1} \left(s_1\right)F\left ( I_n+ \mathcal{K}^{-1} \left(s_1\right)  F + \cdots \right ) - \\
		& \quad F \mathcal{K}^{-1} \left(s_2\right) \left ( I_n+F\mathcal{K}^{-1}\left(s_2\right) + \cdots \right ) N \\
		& \quad  \left ( I_n+ \mathcal{K}^{-1} \left(s_1\right)  F + \left(\mathcal{K}^{-1} \left(s_1\right)  F \right)^2 + \cdots \right )  \bigg)\\
		&=\mathcal{K}\left(s_1\right)\bigg(-N\mathcal{K}^{-1} \left(s_1\right)F\left ( I_n- \mathcal{K}^{-1} \left(s_1\right)  F \right )^{-1} -\\
		& \quad F \mathcal{K}^{-1} \left(s_2\right) \left ( I_n-F\mathcal{K}^{-1}\left(s_2\right) \right )^{-1} N \left ( I_n- \mathcal{K}^{-1} \left(s_1\right)  F  \right )^{-1}  \bigg)\\
		&=\mathcal{K}\left(s_1\right)\Big(-N\mathcal{K}^{-1} \left(s_1\right)F - F \mathcal{K}^{-1} \left(s_2\right) \left ( I_n-F\mathcal{K}^{-1}\left(s_2\right) \right )^{-1}  N   \Big)\\
		& \quad \left ( I_n- \mathcal{K}^{-1} \left(s_1\right)  F  \right )^{-1}.
		\end{alignat*}
		Taking $H_{\infty}-$norm on both sides, we get
		\begin{alignat*}{2}
		\left \| U_2 \right \|_{H_{\infty}} &=  \left \| \mathcal{K}\left(s_1\right)\Big(-N\mathcal{K}^{-1} \left(s_1\right)F - F \mathcal{K}^{-1} \left(s_2\right) \left ( I_n-F\mathcal{K}^{-1}\left(s_2\right) \right )^{-1}  N   \Big) \right.\\
		& \qquad \left. \left ( I_n- \mathcal{K}^{-1} \left(s_1\right)  F  \right )^{-1} \right \|_{H_{\infty}}\\
		&=  \underset{\omega_1, \omega_2 \in \mathbb{R}}{max} \left \| \mathcal{K}\left(i\omega_1\right)\Big(-N\mathcal{K}^{-1} \left(i\omega_1\right)F - F \mathcal{K}^{-1} \left(i\omega_2\right)\right.\\
		&\qquad \left. \left ( I_n-F\mathcal{K}^{-1}\left(i\omega_2\right) \right )^{-1}  N   \Big)   \left ( I_n- \mathcal{K}^{-1} \left(i\omega_1\right)  F  \right )^{-1} \right \|_2.
		\end{alignat*}
		Using $\left\| X Y\right\|_2 \leq \left\| X\right\|_2 \left\| Y\right\|_2$ and $\left\| X+Y\right\|_2 \leq \left\| X\right\|_2 + \left\| Y\right\|_2$, for any two matrices $X$ and $Y$, in the above equation, we get
		\begin{align}
		\left \| U_2 \right \|_{H_{\infty}} 
		\leq & \underset{\omega_1, \omega_2 \in \mathbb{R}}{max} \Bigg(\left \| \mathcal{K}\left(i\omega_1\right)\right \|_2 \bigg(  \left \| N \right \|_2 \left \|\mathcal{K}^{-1} \left(i\omega_1\right)\right \|_2 	\left \| F \right \|_2 + \notag\\
		&  \left \| F \right \|_2 \left \| \mathcal{K}^{-1} \left(i\omega_2\right) \right \|_2  \left \| \left ( I_n-F\mathcal{K}^{-1}\left(i\omega_2\right) \right )^{-1}     \right \|_2 \left \| N \right \|_2 \bigg) \notag\\
		& \left \|\left ( I_n- \mathcal{K}^{-1} \left(i\omega_1\right)  F  \right )^{-1} \right \|_2 \Bigg) \notag\\
		\begin{split} \label{eq:U_2_expression_inequality_max_inside}
		\leq &\left \| \mathcal{K}\left(s_1\right)\right \|_{H_{\infty}}  \left \| N \right \|_{2} 	\left \| F \right \|_{2}  \Bigg(  \left \|\mathcal{K}^{-1} \left(s_1\right)\right \|_{H_{\infty}}   + \\
		& \left \| \mathcal{K}^{-1} \left(s_2\right) \right \|_{H_{\infty}} \ \underset{\omega_2 \in \mathbb{R}}{max}  \left \| \left ( I_n-F\mathcal{K}^{-1}\left(i\omega_2\right) \right )^{-1}     \right \|_2  \Bigg) \\
		& \underset{\omega_1 \in \mathbb{R}}{max}  \left \|\left ( I_n- \mathcal{K}^{-1} \left(i\omega_1\right)  F  \right )^{-1} \right \|_2.
		\end{split}
		\end{align}
		Technically by definition of the $H_{\infty}-$norm and how $\mathcal{K} \left(s\right)$ is defined in our hypotheses, $\left\| \mathcal{K} \left(s_1\right)\right\|_{H_{\infty}} = \left\| \mathcal{K} \left(s_2\right)\right\|_{H_{\infty}} = \left\| \mathcal{K} \left(s\right)\right\|_{H_{\infty}}$, however, for sake of exposition, we keep them separate. Similarly for the $H_{\infty}-$norm of inverses of $\mathcal{K} \left(s_1\right)$ and $\mathcal{K} \left(s_2\right)$.
		\par
		\sloppy
		\quad To abstract $\left\| F\right\|_2$ out from the above inequality, let us look at  $\underset{\omega_2 \in \mathbb{R}}{max}  \left \| \left ( I_n-F\mathcal{K}^{-1}\left(i\omega_2\right) \right )^{-1} \right\|_2$ separately. Recall, while applying Neumann series we assumed that $\left \| F \mathcal{K}^{-1} \left ( s_2 \right ) \right \|_{H_{\infty}} <1$ or  $ \underset{\omega_2 \in \mathbb{R}}{max} \left \| F \mathcal{K}^{-1} \left ( i\omega_2 \right ) \right \|_2 <1$. Since the maximum of such a norm is less than one, we have for all $\omega_2 \in \mathbb{R}$,  $\left \| F \mathcal{K}^{-1} \left ( i\omega_2 \right ) \right \|_2 <1$. Using this along with Lemma 2.3.3 from \cite{g_golub_book}\footnote{If $F \in \mathbb{R}^{n \times n}$ and $\left \| F \right \|_p <1$, then $I-F$ is nonsingular and $\left ( I-F \right )^{-1} =\sum\limits_{k=0}^{\infty} F^k $ with $	\left \| \left ( I-F \right )^{-1} \right \|_p \leq \dfrac{1}{1- \left \| F \right \|_p}$.} in the above expression, we get
		\begin{alignat}{3}
		\underset{\omega_2 \in \mathbb{R}}{max}  \left \| \left ( I_n-F\mathcal{K}^{-1}\left(i\omega_2\right) \right )^{-1} \right\|_2 &\leq  \underset{\omega_2 \in \mathbb{R}}{max} \ \frac{1}{1- \left \| F \mathcal{K}^{-1} \left ( i\omega_2 \right ) \right \|_2} \notag\\
		&\leq  \frac{1}{1-  \underset{\omega_2 \in \mathbb{R}}{max}  \left \| F \mathcal{K}^{-1} \left ( i\omega_2 \right ) \right \|_2} \notag\\
		& \leq  \frac{1}{1-  \left \| F \mathcal{K}^{-1} \left ( s_2 \right ) \right \|_{H_{\infty}}}. \label{eq:max_to_H_infty_inequality_golub_s2}
		\end{alignat}
		If we assume $\left \| F \right \|_2 < 1$ and $\left \| \mathcal{K}^{-1} \left(s\right) \right \|_{H_{\infty}} < 1$ (as in our hypotheses), then using earlier used matrix norm properties, we get
		\begin{alignat*}{2}
		\left\|F\mathcal{K}^{-1}\left(s_2\right)\right\|_{H_{\infty}} = \underset{\omega_2 \in \mathbb{R}}{max} \left\|F\mathcal{K}^{-1}\left(i\omega_2\right)\right\|_2  & \leq  \left \|F\right \|_2 \underset{\omega_2 \in \mathbb{R}}{max} \left\|\mathcal{K}^{-1}\left(i\omega_2\right)\right\|_2 \\
		&\leq \left \|F\right \|_2 \left\|\mathcal{K}^{-1}\left(s_2\right)\right\|_{H_{\infty}} \\
		&\leq 1,
		\end{alignat*}
		as assumed for applying Neumann series earlier as well as Lemma 2.3.3 from \cite{g_golub_book} above. Thus, no extra assumptions beyond those in hypotheses are needed. Further, we also get 
		\begin{alignat}{2}
		1- \left \|F\right \|_2 \left\|\mathcal{K}^{-1}\left(s_2\right)\right\|_{H_{\infty}} & \leq  1-  \left\|F \mathcal{K}^{-1}\left(s_2\right)\right\|_{H_{\infty}} \qquad \textnormal{or} \notag\\
		\frac{1}{ 1-  \left\|F \mathcal{K}^{-1}\left(s_2\right)\right\|_{H_{\infty}}} & \leq \frac{1}{1- \left \|F\right \|_2 \left\|\mathcal{K}^{-1}\left(s_2\right)\right\|_{H_{\infty}} }. \label{eq:golub_inequality_rational_form_individual_2_infty_norm_for_s2}
		\end{alignat}
		\par 
		\quad 
		Similarly, we can bound the last term of \eqref{eq:U_2_expression_inequality_max_inside} as follows:
		\begin{alignat}{3}
		\underset{\omega_1 \in \mathbb{R}}{max}  \left \| \left ( I_n-\mathcal{K}^{-1}\left(i\omega_1\right)F \right )^{-1} \right\|_2 &\leq   \frac{1}{1- \left \|  \mathcal{K}^{-1} \left ( s_1 \right ) F \right \|_{H_{\infty}} } \qquad \textnormal{and} \label{eq:max_to_H_infty_inequality_golub_s1}\\
		\frac{1}{ 1-  \left\| \mathcal{K}^{-1}\left(s_1\right)F\right\|_{H_{\infty}}} & \leq \frac{1}{1-  \left\|\mathcal{K}^{-1}\left(s_1\right)\right\|_{H_{\infty}} \left \|F\right \|_2 }. \label{eq:golub_inequality_rational_form_individual_2_infty_norm_for_s1}
		\end{alignat}
		\par 
		\quad
		Substituting \eqref{eq:max_to_H_infty_inequality_golub_s2}-\eqref{eq:golub_inequality_rational_form_individual_2_infty_norm_for_s2} and \eqref{eq:max_to_H_infty_inequality_golub_s1}-\eqref{eq:golub_inequality_rational_form_individual_2_infty_norm_for_s1} in \eqref{eq:U_2_expression_inequality_max_inside}, we get
		\begin{align*}
		\left \| U_2 \right \|_{H_{\infty}} 
		\leq &\left \| \mathcal{K}\left(s_1\right)\right \|_{H_{\infty}}  \left \| N \right \|_{2} 	\left \| F \right \|_{2}  \Bigg[  \left \|\mathcal{K}^{-1} \left(s_1\right)\right \|_{H_{\infty}}   + \\
		& \dfrac{\left \| \mathcal{K}^{-1} \left(s_2\right) \right \|_{H_{\infty}}}{1-\left \|  F \right \|_{2} \left \|  \mathcal{K}^{-1}\left(s_2\right) \right \|_{H_{\infty}}}     
		\Bigg] \left( \dfrac{1}{1-\left \| \mathcal{K}^{-1} \left(s_1\right)    \right \|_{H_{\infty}} \left \|   F  \right \|_{2}} \right).
		\end{align*}
		From the above inequality it is clear that if  $\left\|F \right\|_{2} \left\|\mathcal{K}^{-1} \left(s_2\right) \right\|_{H_{\infty}} < 1$  and $\left\| \mathcal{K}^{-1} \left(s_1\right) \right\|_{H_{\infty}} \left\| F \right\|_{2}< 1$, which are true from our hypotheses, then
		\begin{equation*}
		\left \| U_2 \right \|_{H_{\infty}}  =  \mathcal{O} \left ( \left \| F \right \|_2 \right ). 
		\end{equation*}
		
		\item \textit{Third subsystem}\\
		Using $M=3$ in \eqref{eq:U_expression}, we get
		\begin{alignat*}{2}
		U_3  = \ &
		\mathcal{K}\left(s_1\right) \mathcal{K}\left(s_2\right)\Big( N \mathcal{K}^{-1} \left(s_2\right) N - \left ( I_n-F\mathcal{K}^{-1} \left(s_3\right) \right )^{-1} N \mathcal{K}^{-1} \left(s_2\right) \\ & \left ( I_n-F\mathcal{K}^{-1} \left(s_2\right) \right )^{-1} N\left ( I_n- \mathcal{K}^{-1} \left(s_1\right)  F \right )^{-1}  \Big).   
		\end{alignat*}
		Following the same steps as in the case of the second subsystem we get 
		\begin{align*}
		\left \| U_3 \right \|_{H_{\infty}} 
		\leq &\left \| \mathcal{K}\left(s_1\right)\right \|_{H_{\infty}} 
		\left \| \mathcal{K}\left(s_2\right)\right \|_{H_{\infty}} \left \| N \right \|_{2}^2 	\left \| F \right \|_{2}  \left \| \mathcal{K}^{-1} \left(s_2\right)\right \|_{H_{\infty}}  \\
		& \Bigg[  \left \|\mathcal{K}^{-1} \left(s_1\right)\right \|_{H_{\infty}}   +  \dfrac{\left \| \mathcal{K}^{-1} \left(s_2\right) \right \|_{H_{\infty}}}{1-\left \|  F \right \|_{2} \left \|  \mathcal{K}^{-1}\left(s_2\right) \right \|_{H_{\infty}}}  
		+ \\
		& \dfrac{\left \| \mathcal{K}^{-1} \left(s_3\right) \right \|_{H_{\infty}}}{\left(1-\left \|  F \right \|_{2} \left \|  \mathcal{K}^{-1}\left(s_3\right) \right \|_{H_{\infty}}\right) \left(1-\left \|  F \right \|_{2} \left \|  \mathcal{K}^{-1}\left(s_2\right) \right \|_{H_{\infty}}\right)}  
		\Bigg] \\& \left( \dfrac{1}{1-\left \| \mathcal{K}^{-1} \left(s_1\right)    \right \|_{H_{\infty}} \left \|   F  \right \|_{2}} \right).
		\end{align*}
		Again, from the above inequality it is clear that if $\left\|F \right\|_{2} \left\|\mathcal{K}^{-1} \left(s_2\right) \right\|_{H_{\infty}}$, $\left\|F \right\|_{2} \left\|\mathcal{K}^{-1} \left(s_3\right) \right\|_{H_{\infty}}$ and  $\left\| \mathcal{K}^{-1} \left(s_1\right) \right\|_{H_{\infty}} \left\| F \right\|_{2}$ are all less than $1$, which are true from our hypotheses, then
		\begin{align*}
		\left \| U_3 \right \|_{H_{\infty}}  =  \mathcal{O} \left ( \left \| F \right \|_2 \right ). 
		\end{align*}
	\end{enumerate}
	\underline{\textbf{Induction Hypothesis}}\\
	From \eqref{eq:U_expression}, we know
	\begin{alignat}{2}\label{eq:U_expression_induction}
	\begin{split}
	U_M &= \mathcal{K}\left ( s_1 \right )\ldots \mathcal{K}\left ( s_{M-1} \right ) 
	\Bigg( N \mathcal{K}^{-1}\left(s_{M-1}\right) \ldots N\mathcal{K}^{-1}\left(s_2\right) N   \\
	&\quad -\left ( I_n- F \mathcal{K}^{-1}\left(s_M\right) \right )^{-1}   N \mathcal{K}^{-1}\left(s_{M-1}\right) \left ( I_n- F \mathcal{K}^{-1}\left(s_{M-1}\right) \right )^{-1} \ldots \\
	& \qquad \quad N \mathcal{K}^{-1}\left(s_2\right) \left ( I_n- F \mathcal{K}^{-1}\left(s_2\right) \right )^{-1}  N  \left ( I_n- \mathcal{K}^{-1}\left(s_1\right) F\right )^{-1}\Bigg).
	\end{split}
	\end{alignat}
	Let
	\begin{align*}
	\left \| U_M \right \|_{H_{\infty}} 
	\leq &\left \| \mathcal{K}\left(s_1\right)\right \|_{H_{\infty}} 
	\ldots \left \| \mathcal{K}\left(s_{M-1}\right)\right \|_{H_{\infty}} \left \| N \right \|_{2}^{M-1} 	\left \| F \right \|_{2}  \left \| \mathcal{K}^{-1}\left(s_{M-1}\right)\right \|_{H_{\infty}}    \\
	& \left \| \mathcal{K}^{-1} \left(s_2\right)\right \|_{H_{\infty}} \Bigg[  \left \|\mathcal{K}^{-1} \left(s_1\right)\right \|_{H_{\infty}}   +  \dfrac{\left \| \mathcal{K}^{-1} \left(s_2\right) \right \|_{H_{\infty}}}{1-\left \|  F \right \|_{2} \left \|  \mathcal{K}^{-1}\left(s_2\right) \right \|_{H_{\infty}}}  \\
	& \quad  \vdots \\
	&+  \dfrac{\left \| \mathcal{K}^{-1} \left(s_M\right) \right \|_{H_{\infty}}}{\left(1-\left \|  F \right \|_{2} \left \|  \mathcal{K}^{-1}\left(s_M\right) \right \|_{H_{\infty}}\right) \ldots \left(1-\left \|  F \right \|_{2} \left \|  \mathcal{K}^{-1}\left(s_2\right) \right \|_{H_{\infty}}\right)} 
	\Bigg] \\
	& \left( \dfrac{1}{1-\left \| \mathcal{K}^{-1} \left(s_1\right)    \right \|_{H_{\infty}} \left \|   F  \right \|_{2}} \right) \quad \textnormal{or}\\
	\left \| U_{M} \right \|_{H_{\infty}}  =& \ \mathcal{O} \left ( \left \| F \right \|_2 \right ).
	\end{align*}
	\underline{\textbf{Induction Step}}\\
	Again, from \eqref{eq:U_expression}, we know 
	\begin{alignat*}{2}
	\begin{split}
	U_{M+1} &= \mathcal{K}\left ( s_1 \right )\ldots \mathcal{K}\left(s_M\right) 
	\Bigg( N \mathcal{K}^{-1}\left(s_{M}\right) \ldots N\mathcal{K}^{-1}\left(s_2\right) N   \\
	&\quad -\left ( I_n- F \mathcal{K}^{-1}\left(s_{M+1}\right) \right )^{-1}   N \mathcal{K}^{-1}\left(s_{M}\right) \left ( I_n- F \mathcal{K}^{-1}\left(s_{M}\right) \right )^{-1} \ldots \\
	&\qquad  N \mathcal{K}^{-1}\left(s_2\right) \left ( I_n- F \mathcal{K}^{-1}\left(s_2\right) \right )^{-1}   N  \left ( I_n- \mathcal{K}^{-1}\left(s_1\right) F\right )^{-1}\Bigg).
	\end{split}
	\end{alignat*}
	We first write $U_{M+1}$ in terms of $U_M$. Thus, in the above equation using Neumann series, we get
	\begin{alignat*}{2}
	U_{M+1} &= \mathcal{K}\left ( s_1 \right )\ldots \mathcal{K}\left(s_M\right) 
	\Bigg( N \mathcal{K}^{-1}\left(s_{M}\right) \ldots N\mathcal{K}^{-1}\left(s_2\right) N   \\
	&\quad -\left ( I_n + F \mathcal{K}^{-1}\left(s_{M+1}\right) + \left(F \mathcal{K}^{-1}\left(s_{M+1}\right)\right)^2 + \cdots \right ) \\
	&\qquad N \mathcal{K}^{-1}\left(s_{M}\right) \left ( I_n- F \mathcal{K}^{-1}\left(s_{M}\right) \right )^{-1} \ldots   N \mathcal{K}^{-1}\left(s_2\right) \left ( I_n- F \mathcal{K}^{-1}\left(s_2\right) \right )^{-1}  \\
	& \qquad	 N  \left ( I_n- \mathcal{K}^{-1}\left(s_1\right) F\right )^{-1}\Bigg)\\
	&= \mathcal{K}\left ( s_1 \right )\ldots \mathcal{K}\left(s_M\right) 
	\Bigg( N \mathcal{K}^{-1}\left(s_{M}\right) \ldots N\mathcal{K}^{-1}\left(s_2\right) N   \\
	&\quad - N \mathcal{K}^{-1}\left(s_{M}\right) \left ( I_n- F \mathcal{K}^{-1}\left(s_{M}\right) \right )^{-1} \ldots   N \mathcal{K}^{-1}\left(s_2\right) \left ( I_n- F \mathcal{K}^{-1}\left(s_2\right) \right )^{-1} \\
	&\qquad N  \left ( I_n- \mathcal{K}^{-1}\left(s_1\right) F\right )^{-1}\\
	&\quad - F \mathcal{K}^{-1}\left(s_{M+1}\right) \left ( I_n - F \mathcal{K}^{-1}\left(s_{M+1}\right) \right )^{-1} \\
	&\qquad N \mathcal{K}^{-1}\left(s_{M}\right) \left ( I_n- F \mathcal{K}^{-1}\left(s_{M}\right) \right )^{-1} \ldots   N \mathcal{K}^{-1}\left(s_2\right) \left ( I_n- F \mathcal{K}^{-1}\left(s_2\right) \right )^{-1}  \\
	& \qquad	 N  \left ( I_n- \mathcal{K}^{-1}\left(s_1\right) F\right )^{-1}\Bigg).
	\end{alignat*}
	In the above equation, taking $N \mathcal{K}^{-1}\left(s_M\right)$ common from the first two terms of the bigger bracket, we have 
	\begin{alignat}{2} \label{eq:U_M+1_expression_comparisson}
	\begin{split}
	&= \mathcal{K}\left ( s_1 \right )\ldots \mathcal{K}\left(s_M\right) 
	\Bigg( N \mathcal{K}^{-1}\left(s_{M}\right) \bigg( N \mathcal{K}^{-1}\left(s_{M-1}\right)\ldots N\mathcal{K}^{-1}\left(s_2\right)N\\
	&\ - \left ( I_n- F \mathcal{K}^{-1}\left(s_{M}\right) \right )^{-1}   N \mathcal{K}^{-1}\left(s_{M-1}\right)
	\left ( I_n- F \mathcal{K}^{-1}\left(s_{M-1}\right) \right )^{-1} \ldots  \\
	&\quad N \mathcal{K}^{-1}\left(s_2\right) \left ( I_n- F \mathcal{K}^{-1}\left(s_2\right) \right )^{-1}  N  \left ( I_n- \mathcal{K}^{-1}\left(s_1\right) F\right )^{-1}\bigg)\\
	&\ - F \mathcal{K}^{-1}\left(s_{M+1}\right) \left ( I_n - F \mathcal{K}^{-1}\left(s_{M+1}\right) \right )^{-1} \\
	&\quad N \mathcal{K}^{-1}\left(s_{M}\right) \left ( I_n- F \mathcal{K}^{-1}\left(s_{M}\right) \right )^{-1} \ldots   N \mathcal{K}^{-1}\left(s_2\right) \left ( I_n- F \mathcal{K}^{-1}\left(s_2\right) \right )^{-1}  \\
	& \quad	 N  \left ( I_n- \mathcal{K}^{-1}\left(s_1\right) F\right )^{-1}\Bigg).
	\end{split}
	\end{alignat}
	Now we look at expression of $U_M$. Multiplying $\mathcal{K}^{-1} \left ( s_{M-1} \right )\ldots \mathcal{K}^{-1}\left ( s_{1} \right )$ on both the sides of \eqref{eq:U_expression_induction} from left, we get
	\begin{alignat}{2}\label{eq:U_M_expression_rewritten}
	\begin{split}
	&\mathcal{K}^{-1} \left ( s_{M-1} \right )\ldots \mathcal{K}^{-1}\left ( s_{1} \right )  U_M \\ 
	&=\Bigg( N \mathcal{K}^{-1}\left(s_{M-1}\right) \ldots N\mathcal{K}^{-1}\left(s_2\right) N   \\
	&\ - \left ( I_n- F \mathcal{K}^{-1}\left(s_M\right) \right )^{-1}   N \mathcal{K}^{-1}\left(s_{M-1}\right) \left ( I_n- F \mathcal{K}^{-1}\left(s_{M-1}\right) \right )^{-1} \ldots \\
	&\qquad  N \mathcal{K}^{-1}\left(s_2\right) \left ( I_n- F \mathcal{K}^{-1}\left(s_2\right) \right )^{-1}   N  \left ( I_n- \mathcal{K}^{-1}\left(s_1\right) F\right )^{-1}\Bigg).
	\end{split}
	\end{alignat}
	Substituting \eqref{eq:U_M_expression_rewritten} in \eqref{eq:U_M+1_expression_comparisson}, we get
	\begin{alignat*}{2} 
	\begin{split}
	U_{M+1} &= \mathcal{K}\left ( s_1 \right )\ldots \mathcal{K}\left(s_M\right) 
	\Bigg( N \mathcal{K}^{-1}\left(s_{M}\right)  \bigg(\mathcal{K}^{-1} \left ( s_{M-1} \right )\ldots \mathcal{K}^{-1}\left ( s_{1} \right )  U_M\bigg)\\
	&\quad - F \mathcal{K}^{-1}\left(s_{M+1}\right) \left ( I_n - F \mathcal{K}^{-1}\left(s_{M+1}\right) \right )^{-1} \\
	&\qquad N \mathcal{K}^{-1}\left(s_{M}\right) \left ( I_n- F \mathcal{K}^{-1}\left(s_{M}\right) \right )^{-1} \ldots   N \mathcal{K}^{-1}\left(s_2\right) \left ( I_n- F \mathcal{K}^{-1}\left(s_2\right) \right )^{-1}  \\
	& \qquad	 N  \left ( I_n- \mathcal{K}^{-1}\left(s_1\right) F\right )^{-1}\Bigg).
	\end{split}
	\end{alignat*}
	Taking $H_{\infty}-$norm on both sides, we get
	\begin{alignat*}{2} 
	&\left \| U_{M+1}\right \|_{H_{\infty}}  = \\
	& \quad \left \| \mathcal{K}\left ( s_1 \right )\ldots \mathcal{K}\left(s_M\right) 
	\Bigg( N \mathcal{K}^{-1}\left(s_{M}\right) \mathcal{K}^{-1} \left ( s_{M-1} \right )\ldots \mathcal{K}^{-1}\left ( s_{1} \right )  U_M  \right.\\
	&\quad - F \mathcal{K}^{-1}\left(s_{M+1}\right) \left ( I_n - F \mathcal{K}^{-1}\left(s_{M+1}\right) \right )^{-1} \\
	&\qquad N \mathcal{K}^{-1}\left(s_{M}\right) \left ( I_n- F \mathcal{K}^{-1}\left(s_{M}\right) \right )^{-1} \ldots   N \mathcal{K}^{-1}\left(s_2\right)  \left ( I_n- F \mathcal{K}^{-1}\left(s_2\right) \right )^{-1}\\
	& \left. \qquad	  N  \left ( I_n- \mathcal{K}^{-1}\left(s_1\right) F\right )^{-1}\Bigg)\right \|_{H_{\infty}}. 
	\end{alignat*}
	As earlier, using the norm inequality properties in the above equation, we get
	\begin{alignat*}{2}
	&\left \| U_{M+1} \right \|_{H_{\infty}} 
	\leq \\
	&\qquad \underset{\omega_1, \ \ldots, \ \omega_{M+1} \in \mathbb{R}}{max} \Bigg[ \left \| \mathcal{K}\left ( i\omega_1 \right ) \right \|_2 \ldots \left \| \mathcal{K}\left(i\omega_M\right) \right \|_2 
	\bigg( \left \| N \right \|_2 \left \|  \mathcal{K}^{-1}\left(i\omega_M\right)  \right \|_2  \ldots   \\
	&\qquad  \left \| \mathcal{K}^{-1}\left ( i\omega_1 \right )  \right \|_2 \left \| U \left(i\omega_1, \ \ldots, \ i\omega_M\right)\right \|_2 + \left \| F \right \|_2 \left \| \mathcal{K}^{-1}\left(i\omega_{M+1}\right) \right \|_2   \\
	&\qquad \left \| \left ( I_n - F \mathcal{K}^{-1}\left(i\omega_{M+1}\right) \right )^{-1}  \right \|_2 \left \| N \right \|_2 \left \| \mathcal{K}^{-1}\left(i\omega_M\right)  \right \|_2  \left \| \left ( I_n- F \mathcal{K}^{-1}\left(i\omega_M\right) \right )^{-1} \right \|_2 \\
	&\qquad \ldots   \left \| N \right \|_2 \left \| \mathcal{K}^{-1}\left(i\omega_2\right) \right \|_2  \left \| \left ( I_n- F \mathcal{K}^{-1}\left(i\omega_2\right) \right )^{-1}   \right \|_2  \\
	& \qquad  \left \| N \right \|_2  \left \|  \left ( I_n- \mathcal{K}^{-1}\left(i\omega_1\right) F\right )^{-1}\right \|_2 \bigg) \Bigg].
	\end{alignat*}
	Similar to \eqref{eq:max_to_H_infty_inequality_golub_s2} and \eqref{eq:golub_inequality_rational_form_individual_2_infty_norm_for_s2}, here also, using Lemma 2.3.3 from \cite{g_golub_book} we get 
	\begin{align*}
	\left \| U_{M+1} \right \|_{H_{\infty}} 
	\leq &  \left \| \mathcal{K}\left(s_1\right)\right \|_{H_{\infty}} 
	\ldots \left \| \mathcal{K}\left(s_{M}\right)\right \|_{H_{\infty}} \left \| N \right \|_{2} \left \| \mathcal{K}^{-1}\left(s_{M}\right)\right \|_{H_{\infty}}  \ldots  \\ & \left \| \mathcal{K}^{-1} \left(s_2\right)\right \|_{H_{\infty}} \Bigg[  \left \|\mathcal{K}^{-1} \left(s_1\right)\right \|_{H_{\infty}}  \left \|U_M\right \|_{H_{\infty}} +   \\
	&\dfrac{ \left \|N\right \|_{2}^{M-1}  \left \| \mathcal{K}^{-1}\left(s_{M+1}\right)\right \|_{H_{\infty}}  }{\left(1-\left \|  F \right \|_{2} \left \|  \mathcal{K}^{-1}\left(s_{M+1}\right) \right \|_{H_{\infty}}\right) \ldots \left(1-\left \|  F \right \|_{2} \left \|  \mathcal{K}^{-1}\left(s_2\right) \right \|_{H_{\infty}}\right)} \cdot \\
	&\dfrac{\left \|   F  \right \|_{2}}{1-\left \| \mathcal{K}^{-1} \left(s_1\right)    \right \|_{H_{\infty}} \left \|   F  \right \|_{2}} \Bigg].
	\end{align*}
	From induction hypothesis we know $\left \| U_{M} \right \|_{H_{\infty}}  \propto \mathcal{O} \left ( \left \| F \right \|_2 \right )$. Using this we get
	\begin{align*}
	\left \| U_{M+1} \right \|_{H_{\infty}}  \propto \mathcal{O} \left ( \left \| F \right \|_2 \right ). 
	\end{align*}
\end{proof}
\begin{theorem}\label{Theorem:second_condition_subsystem}
If 
\begin{align*}
H_{M}\left ( s_{1},\ \ldots,\ s_{M} \right )  = & \ c\left ( s_{M}I_n-A \right )^{-1}N \left ( s_{M-1}I_n-A \right )^{-1}      \ldots N\left ( s_{1}I_n-A \right )^{-1}b,\\
\widetilde{H}_{M}\left ( s_{1},\ \ldots,\ s_{M} \right )  = & \ c\left ( s_{M}I_n-\left ( A+F \right ) \right )^{-1} \\
& \ N \left ( s_{M-1}I_n- \left ( A+F \right ) \right )^{-1} \ldots   N\left ( s_{1}I_n- \left ( A+F \right ) \right )^{-1}b,
\end{align*}
	$\left\|F\right\|_2 < 1$, and  $\left \|\mathcal{K}^{-1}\left(s\right)  \right \|_{H_{\infty}} <1$, where $\mathcal{K}\left(s\right)= \left(sI_n - A\right)$, then
	\begin{align*}
	\left \|H_{M}\left ( s_{1},\ \ldots,\ s_{M} \right ) - \widetilde{H}_{M}\left ( s_{1},\ \ldots,\ s_{M} \right )  \right \|_{H_{2}}^2 = \mathcal{O}\left ( \left \| F \right \|_2^2\right ) \qquad \textnormal{or}
	\end{align*}
	TBIRKA satisfies the second condition of backward stability with respect to inexact linear solves.   
\end{theorem}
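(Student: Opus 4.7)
The plan is to combine Lemma \ref{Lemma:Mth_term_H2_norm} and Lemma \ref{Lemma:induction_proof}, which together do essentially all of the work. Lemma \ref{Lemma:Mth_term_H2_norm} gives a factorized upper bound on $\left\|H_M-\widetilde{H}_M\right\|_{H_2}^2$ in which the only factor that depends on the perturbation $F$ is $\left\|U(s_1,\ldots,s_M)\right\|_{H_\infty}^2$. Lemma \ref{Lemma:induction_proof} then shows $\left\|U_M\right\|_{H_\infty}\propto\mathcal{O}(\|F\|_2)$, so squaring yields $\left\|U_M\right\|_{H_\infty}^2 = \mathcal{O}(\|F\|_2^2)$, and the remaining factors behave as $F$-independent constants.

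First I would verify that the hypotheses of both lemmas are in force. Lemma \ref{Lemma:Mth_term_H2_norm} only relies on the standing assumption (from Section \ref{sec:Inroduction}) that the improper integrals defining the $H_2$-norms are finite. Lemma \ref{Lemma:induction_proof} requires $\|F\|_2<1$ and $\left\|\mathcal{K}^{-1}(s)\right\|_{H_\infty}<1$, and both appear directly as hypotheses of the theorem, so nothing extra has to be imposed.

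Next, I would apply Lemma \ref{Lemma:Mth_term_H2_norm} to write
\begin{align*}
\left\|H_M-\widetilde{H}_M\right\|_{H_2}^2 & \leq \left\|c\mathcal{K}^{-1}(s_M)\right\|_{H_2}^2 \left\|\mathcal{K}^{-1}(s_{M-1})\right\|_{H_2}^2 \cdots \left\|\mathcal{K}^{-1}(s_1)\right\|_{H_2}^2 \\
& \quad \cdot \left\|U(s_1,\ldots,s_M)\right\|_{H_\infty}^2 \left\|\mathcal{K}^{-1}(s_1)b\right\|_{H_\infty}^2,
\end{align*}
and then insert the estimate $\left\|U_M\right\|_{H_\infty}^2 = \mathcal{O}(\|F\|_2^2)$ from Lemma \ref{Lemma:induction_proof}. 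The remaining factors are $F$-independent and finite: the $H_2$-norms by the standing finiteness assumption, and $\left\|\mathcal{K}^{-1}(s_1)b\right\|_{H_\infty}$ because it is bounded by $\left\|\mathcal{K}^{-1}(s)\right\|_{H_\infty}\|b\|_2$, which is finite under the theorem's hypothesis. Absorbing these into a single constant yields exactly $\left\|H_M-\widetilde{H}_M\right\|_{H_2}^2=\mathcal{O}(\|F\|_2^2)$, which is precisely the $M$-th line of \eqref{eq:second_condition_of_backward_stability_series} and hence establishes the second condition of backward stability of TBIRKA.

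There is essentially no hard step remaining, since the heavy lifting has already been done in the two preceding lemmas; this theorem is really a bookkeeping step that assembles them. The one subtlety worth flagging is a notational one: the $\propto$ appearing in Lemma \ref{Lemma:induction_proof} should be read as \emph{``there exists a constant $C$ independent of $F$ with $\left\|U_M\right\|_{H_\infty}\leq C\|F\|_2$''}. Squaring produces $C^2\|F\|_2^2$, and multiplying by the finite, $F$-free product of the other norms keeps the order estimate intact, producing a single constant multiplying $\|F\|_2^2$.
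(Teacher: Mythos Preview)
Your proposal is correct and matches the paper's approach exactly: the paper explicitly sets up Theorem \ref{Theorem:second_condition_subsystem} as the combination of Lemma \ref{Lemma:Mth_term_H2_norm} (isolating the $F$-dependence into $\|U_M\|_{H_\infty}$) and Lemma \ref{Lemma:induction_proof} (showing $\|U_M\|_{H_\infty}\propto\mathcal{O}(\|F\|_2)$), and in fact does not even write out a separate proof block for the theorem. Your hypothesis-checking and the observation that the remaining factors are $F$-independent constants are precisely the bookkeeping the paper leaves implicit.
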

\section[Conclusions \& Future Work]{Conclusions \& Future Work} \label{sec:Conclusions Future Work}
In this paper, we apply iterative linear solvers during model order reduction (MOR) of bilinear dynamical systems. Since such solvers are inexact, the stability of the underlying MOR algorithm, with respect to these approximation errors, is important. Here, we extend the earlier stability analysis done for BIRKA in \cite{BIRKAstabilitylaapaper}, which is a standard algorithm for obtaining a reduced bilinear dynamical system, to TBIRKA, a cheaper variant of BIRKA. 
\par 
Proving that an algorithm is stable, typically requires satisfying two conditions. In TBIRKA, fulfilling the first condition for stability leads to constraints on the iterative linear solver, which are similar to those obtained during BIRKA's stability analysis. The second condition for a stable TBIRKA is satisfied using an approach different than the one used in BIRKA, and is more intuitive.
\par 
Our first future direction is to extend our analysis from SISO (Single Input Single Output) to MIMO (Multiple Input Multiple Output) systems. The stability analysis as done for BIRKA earlier and TBIRKA here, all give us sufficiency conditions for a stable underlying MOR algorithm. Hence, second, we plan to derive the necessary conditions for the same. In recent years, there have been a lot of efforts in performing data-driven MOR algorithm (specially using Leowner framework \cite{bilinear_model_reduction_Antoulas_Loewner_framework}). Similar linear systems and challenges associated with the scaling arise here as well. Our third future direction is to apply this stability analysis to such classes of algorithms as well. 


\vspace{2mm} \noindent \footnotesize
\begin{minipage}[b]{10cm}
Rajendra Choudhary, \\
Computational Science \& Engineering Lab, \\ 
Indian Institute of Technology Indore, \\ 
Khandwa Road, Simrol, Indore-453552, India. \\
Email: rajendracse46@gmail.com, phd1301201004@iiti.ac.in
\end{minipage}

\vspace{2mm} \noindent \footnotesize
\begin{minipage}[b]{10cm}
Kapil Ahuja, \\
Computational Science \& Engineering Lab, \\  
Indian Institute of Technology Indore, \\ 
Khandwa Road, Simrol, Indore-453552, India. \\
Email: kapsahuja22@gmail.com, kahuja@iiti.ac.in
\end{minipage}

\end{document}